\newtheorem{theorem}{Theorem}
\newtheorem{corollary}{Corollary}
\newtheorem{proposition}{Proposition}
\newtheorem{lemma}{Lemma}
\newtheorem{example}{Example}
\newtheorem{remark}{Remark}
\newtheorem{definition}{Definition}
\newcommand{\beq}{\begin{equation}}
\newcommand{\eeq}{\end{equation}}
\newcommand{\beas}{\begin{eqnarray*}}
\newcommand{\eeas}{\end{eqnarray*}}
\newcommand{\bea}{\begin{eqnarray}}
\newcommand{\eea}{\end{eqnarray}}
\newcommand{\bei}{\begin{itemize}}
\newcommand{\eei}{\end{itemize}}
\newcommand{\ben}{\begin{enumerate}}
\newcommand{\een}{\end{enumerate}}
\newcommand{\bet}{\begin{theorem}}
\newcommand{\eet}{\end{theorem}}
\newcommand{\bel}{\begin{lemma}}
\newcommand{\eel}{\end{lemma}}
\newcommand{\bep}{\begin{proposition}}
\newcommand{\eep}{\end{proposition}}
\newcommand{\bed}{\begin{definition}}
\newcommand{\eed}{\end{definition}}
\newcommand{\bec}{\begin{corollary}}
\newcommand{\eec}{\end{corollary}}
\newcommand{\bex}{\begin{example}}
\newcommand{\eex}{\end{example}}
\newenvironment{proof}[1][Proof]{\textbf{#1.} }{\ \rule{0.5em}{0.5em}}
\newcommand{\qed}{\quad\hbox{\vrule width 4pt height 6pt depth 1.5pt}}
\newcommand{\goto}{\rightarrow}
\newcommand{\hf}{{1\over 2}}
\def\sf{{\cal F}}
\def\limsup{\mathop{\overline{\rm lim}}}
\def\liminf{\mathop{\underline{\rm lim}}}
\begin{document}

\title{Law of Log Determinant of Sample Covariance Matrix and Optimal Estimation of Differential Entropy for High-Dimensional Gaussian Distributions}

\author{T. Tony Cai$^{1}$, Tengyuan Liang$^{1}$, and Harrison H. Zhou$^{2}$}
\date{}
\maketitle

\begin{abstract}

Differential entropy and log determinant of the covariance matrix of a multivariate Gaussian distribution have  many applications in coding, communications, signal processing and statistical inference. In this paper we consider in the high dimensional setting optimal estimation of the differential entropy and the log-determinant of the covariance matrix. We first establish a central limit theorem for the log determinant of the sample covariance matrix in the high dimensional setting where the dimension $p(n)$ can grow with the sample size $n$. An estimator of the differential entropy and the log determinant is then considered. Optimal rate of convergence is obtained. It is shown that in the case $p(n)/n \rightarrow 0$ the estimator is asymptotically sharp minimax. The ultra-high dimensional setting where $p(n) > n$ is also discussed.

\end{abstract}

\footnotetext[1]{
Department of Statistics, The Wharton School, University of Pennsylvania, Philadelphia, PA 19104. \newline
\indent \ \ 
The research of Tony Cai was supported in part by NSF Grant DMS-0604954 and NSF FRG Grant \newline
\indent \ \ DMS-0854973.} 
\footnotetext[2]{
Department of Statistics, Yale University, New Haven, CT 06511. The research of Harrison Zhou was \newline
\indent \ \ 
supported in part by NSF Career Award DMS-0645676 and NSF FRG Grant DMS-0854975.}

\noindent \textbf{Keywords:\/} Asymptotic optimality, central limit theorem, covariance matrix, determinant, differential entropy, minimax lower bound, sharp minimaxity.

\noindent\textbf{AMS 2000 Subject Classification: \/} Primary 62H12, 62H10; secondary 62F12, 	94A17.

\newpage

\section{Introduction}
\label{introduction.sec}

The determinant of a random matrix is an important functional that has been actively studied  in random matrix theory under different settings. See, for example, \cite{goodman1963distribution, komlos1967determinant,komlos1968determinant, girko1980central,girko1990theory,girko1998refinement, delannay2000distribution, tao2006random,tao2012central, rouault2006asymptotic, nguyen2011random}.
In particular, central limit theorems for the log-determinant have been established for random Gaussian matrices  in \cite{goodman1963distribution}, for general real i.i.d. random matrices in \cite{nguyen2011random} under an exponential tail condition on the entries,  and for Wigner matrices in \cite{tao2012central}. 
The determinant of random matrices has many applications. For example, the determinant is needed for computing the volume of random parallelotopes, which is of significant interest in random geometry (see \cite{mathai1999random, nielsen1999distribution}). More specifically, let  $Z=(Z_1,\ldots,Z_p)$ be linearly independent random vectors in $\mathbb{R}^n$ with $p\le n$. Then the convex hull of these $p$ points in $\mathbb{R}^n$ almost surely determines a $p-$parallelotope and the volume of this random $p-$parallelotope is given by $\bigtriangledown_{n,p} = \det (Z^T Z)^{1/2}$, the squared root of the determinant of th random matrix $Z^T Z$. 

The  differential entropy and the determinant of the covariance matrix of a multivariate Gaussian distribution play a particularly important role in information theory and statistical inference. The differential entropy has a wide range of applications in many areas including coding, machine learning, signal processing, communications, biosciences and chemistry. See \cite{srivastava2008bayesian, gupta2010parametric,beirlant1997nonparametric,costa2004geodesic, misra2005estimation}. For example, in molecular biosciences, the evaluation of entropy of a molecular system is important for understanding its thermodynamic properties. In practice, measurements on macromolecules are often modeled as Gaussian vectors.  For a multivariate Gaussian distribution $\mathcal{N}_p(\mu, \Sigma)$, it is well-known that the differential entropy $\mathcal{H}(\cdot)$ is given by
\begin{equation}
\mathcal{H}(\Sigma) = \frac{p}{2}+\frac{p\log(2\pi)}{2}+\frac{\log \det \Sigma}{2}. \label{Entropy}
\end{equation}
In this case, estimation of the differential entropy of the system is thus equivalent to estimation of the log determinant of the covariance matrix from the sample. For other applications, the relative entropy (a.k.a. the Kullback-Leiber Divergence), which involves the difference of the log determinants of two covariance matrices in the Gaussian case, is important. The determinant of the covariance matrices  is also needed for constructing hypothesis tests in multivariate statistics (see  \cite{anderson2003introduction, muirhead1982aspects}). For example, the likelihood ratio test for testing linear hypotheses about regression coefficients in MANOVA is based on the ratio of the determinants of two sample covariance matrices  \cite{anderson2003introduction}.
In addition,  quadratic discriminant analysis, which is an important technique for classification, requires the knowledge of the difference of the log determinants of the covariance matrices of Gaussian distributions. 
For these applications, it is important to understand the properties of the log determinant of the sample covariance matrix. The high-dimensional setting where the dimension $p(n)$ grows with the sample size $n$ is of particular current interest.

Motivated by the applications mentioned above,  in the present paper we first study the limiting law of the log determinant of the sample covariance matrix for the high-dimensional Gaussian distributions.  Let $X_{1},\ldots ,X_{n+1}$ be an independent random sample from 	the $p$-dimensional Gaussian distribution  $\mathcal{N}_p(\mu, \Sigma)$.
The sample covariance matrix is 
\beq
\label{sample.cov}
\hat{\Sigma} = \frac{1}{n}\sum_{k=1}^{n+1} (X_{k} - \bar{X})(X_{k} - \bar{X})^T.
\eeq
A central limit theorem is established for the log determinant of $\hat{\Sigma}$ in the high-dimensional setting where the dimension $p$ grows with the sample size $n$ with the only restriction that $p(n) \le n$. In the case when $\lim_{n\rightarrow \infty} \frac{p(n)}{n} = r$ for some $0 \leq r < 1$, the central limit theorem shows
\beq
\label{CLT}
\frac{\log \det \hat \Sigma  -\sum_{k=1}^p \log\left(1- \frac{k}{n}\right)- \log \det \Sigma}{\sqrt{-2 \log\left(1-\frac{p}{n}\right)}} \stackrel{L}{\longrightarrow} \mathcal{N}(0,1)  \quad\mbox{as $n\goto\infty$.}
\eeq
The result for the boundary case $p=n$ yields
\begin{equation}
\frac{\log \det \hat \Sigma  - \log (n-1)! + n\log n - \log \det \Sigma}{\sqrt{2\log n}} \stackrel{L}{\longrightarrow} \mathcal{N}(0,1), \quad\mbox{as $n\goto\infty$.}
\end{equation}
In particular, this result recovers the central limit theorem for the log determinant of a random matrix  with iid standard Gaussian entries. See  \cite{goodman1963distribution} and  \cite{nguyen2011random}. 

We then consider optimal estimation of the differential entropy and the log-determinant of the covariance matrix in the high dimensional setting. In the conventional fixed dimensional case,  estimation of the differential entropy  has been considered by using both Bayesian and frequentist methods. See, for example, \cite{misra2005estimation, srivastava2008bayesian, ahmed1989entropy}. A Bayesian estimator was proposed in \cite{srivastava2008bayesian} using the inverse Wishart prior which works without the restriction that dimension is smaller than the sample size. However, how to choose good parameter values for the inverse Wishart prior remains an open question when the population covariance matrix is nondiagonal. A uniformly minimum variance unbiased  estimator (UMVUE) was constructed in \cite{ahmed1989entropy}. It was later  proved in  \cite{misra2005estimation} that this UMVUE is in fact dominated by a Stein type estimator and is thus inadmissible. 
The construction of an admissible estimator was left as an open problem in \cite{misra2005estimation}. 

Based on the central limit theorem for the log determinant of  the sample covariance matrix $\hat{\Sigma}$, we consider an estimator of the differential entropy and the log determinant of $\Sigma$ and study its properties. A non-asymptotic upper bound for the mean squared error of the estimator is obtained. To show the optimality of the estimator, non-asymptotic minimax lower bounds are established using Cramer-Rao's Information Inequality. The lower bound results show that consistent estimation of $\log \det \Sigma$ is only possible when ${p(n) \over n} \goto 0$. Furthermore, it is shown that the estimator is asymptotically sharp minimax in the setting of ${p(n) \over n} \goto 0$.

The ultra-high dimensional setting where $p(n) > n$ is important due to many contemporary applications. It is a common practice in high dimensional statistical inference, including compressed sensing and covariance matrix estimation,  to impose structural assumption such as sparsity on the target in order to effectively estimate the quantity of interest. 
It is of significant interest to consider estimation of the log determinant of  the covariance matrix and the differential entropy  in the case $p(n)>n$ under such structural assumptions. A minimax lower bound is given in Section \ref{discussion.sec}  using Le Cam's method which shows that it is in fact not possible to estimate the log determinant consistently even when the covariance matrix is known to be diagonal with equal values. This negative result implies that consistent estimation of $\log \det \Sigma$ is not possible when $p(n) > n$ over all the collections of the commonly considered structured covariance matrices such as bandable, sparse, or Toeplitz covariance matrices.

The rest of the paper is organized as follows.  Section \ref{CLT.sec} establishes a central limit theorem for the log determinant of the sample covariance matrix. Section \ref{estimation.sec} considers optimal estimation of the differential entropy and the log-determinant of the covariance matrix. Optimal rate of convergence is established and the estimator is shown to be asymptotically sharp minimax when ${p(n)\over n} \goto 0$.
Section \ref{discussion.sec} discusses  related applications and  the case of $p(n)> n$. The proofs of the main results are given in Section \ref{proof.sec}.

\section{Limiting Law of the Log Determinant of the Sample Covariance Matrix}
\label{CLT.sec}

In this section, we consider the limiting distribution of the log determinant of the sample covariance matrix $\hat \Sigma$ and establish a central limit theorem for $\log\det \hat\Sigma$ in the high dimensional setting where $p(n)$ can grow with $n$ under the restriction  that $p(n) \leq n$. 

For two positive integers $n$ and $p$, define the constant $\tau_{n,p}$ by
\beq
\label{tau}
\tau_{n,p}:=\sum_{k=1}^p \left[ \psi\left(\frac{n-k+1}{2}\right)- \log \left(\frac{n}{2} \right)\right]
\eeq
where $\psi(x) = \frac{\partial} {\partial z} \log \Gamma(z) |_{z=x}$ is the Digamma function with $\Gamma(z)$ being the gamma function, and define the constant $\sigma_{n,p}$ by
\beq
\label{sigma}
\sigma_{n,p}:=\left(\sum_{k=1}^p \frac{2}{n-k+1}\right)^\hf.
\eeq
 
 We have the following central limit theorem for $\log\det \hat\Sigma$.
\begin{theorem}[Asymptotic Distribution]
\label{CLT.thm}
Let  $X_{1},\ldots ,X_{n+1} \stackrel{iid}{\sim} \mathcal{N}_p(\mu, \Sigma)$.  Suppose that $ n \rightarrow \infty$ and $p(n) \leq n$. Then the  log determinant of the sample covariance matrix $\hat \Sigma$ satisfies
\beq
\label{CLT2}
\frac{\log \det \hat \Sigma  -\tau_{n,p}- \log \det \Sigma}{\sigma_{n,p}} \stackrel{L}{\longrightarrow} \mathcal{N}(0,1) 
\quad\mbox{as $n\goto \infty$,}
\eeq
where the constants $\tau_{n,p}$ and $\sigma_{n,p}$ are given in \eqref{tau} and \eqref{sigma} respectively.
\end{theorem}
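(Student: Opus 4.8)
The plan is to reduce the problem to a central limit theorem for a sum of independent log-chi-square random variables and then verify a Lyapunov-type condition. The starting point is exact Gaussian distribution theory. Since $X_{1},\ldots,X_{n+1}$ are i.i.d.\ $\mathcal{N}_p(\mu,\Sigma)$, the matrix $n\hat\Sigma=\sum_{k=1}^{n+1}(X_k-\bar X)(X_k-\bar X)^T$ follows a Wishart distribution $W_p(n,\Sigma)$, the single degree of freedom lost to centering at $\bar X$ accounting for $n$ rather than $n+1$. Writing $n\hat\Sigma=\Sigma^{1/2}M\Sigma^{1/2}$ with $M\sim W_p(n,I_p)$ and applying the Bartlett decomposition $M=LL^T$, where $L$ is lower triangular with mutually independent entries and diagonal squares $L_{kk}^2\sim\chi^2_{n-k+1}$, yields the key distributional identity
\[
\det(n\hat\Sigma)\stackrel{d}{=}\det\Sigma\,\prod_{k=1}^p\chi^2_{n-k+1},
\]
with the $\chi^2_{n-k+1}$ independent (this uses $p\le n$ so that $L$ is well defined). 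Taking logarithms and subtracting $p\log n$ gives
\[
\log\det\hat\Sigma-\log\det\Sigma\;\stackrel{d}{=}\;\sum_{k=1}^p\bigl(\log\chi^2_{n-k+1}-\log n\bigr)=:S_{n,p}.
\]

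Next I would compute the first two moments of $S_{n,p}$ from the standard formulas $E[\log\chi^2_m]=\psi(m/2)+\log 2$ and $\mathrm{Var}[\log\chi^2_m]=\psi'(m/2)$, with $\psi'$ the trigamma function. Summing the means recovers exactly the stated centering, $E[S_{n,p}]=\sum_{k=1}^p[\psi((n-k+1)/2)-\log(n/2)]=\tau_{n,p}$, so the numerator in \eqref{CLT2} equals $S_{n,p}-E[S_{n,p}]$ in distribution. The true variance is $v_{n,p}:=\sum_{k=1}^p\psi'((n-k+1)/2)$, which is not literally $\sigma_{n,p}^2$ but should be asymptotically equivalent to it. Using $\psi'(x)=x^{-1}+O(x^{-2})$ one has $\psi'((n-k+1)/2)=2/(n-k+1)+O((n-k+1)^{-2})$, so the aggregate discrepancy $v_{n,p}-\sigma_{n,p}^2=\sum_{m=n-p+1}^n O(m^{-2})$ stays bounded, while $\sigma_{n,p}^2=\sum_{m=n-p+1}^n 2/m$ either remains bounded away from zero (when $p/n\to r\in(0,1)$) or diverges like $2\log n$ (at the boundary $p=n$); in every regime $p(n)\le n$ this forces $v_{n,p}/\sigma_{n,p}^2\goto 1$.

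It then remains to prove $(S_{n,p}-E[S_{n,p}])/\sqrt{v_{n,p}}\stackrel{L}{\longrightarrow}\mathcal{N}(0,1)$ for the independent summands and to conclude via Slutsky's theorem together with the variance equivalence. For the CLT I would verify a Lyapunov condition of order $2+\delta$, using the bound $E|\log\chi^2_m-E\log\chi^2_m|^{2+\delta}=O(m^{-(1+\delta/2)})$ for large $m$. The main obstacle is to make this uniform over the whole range $p(n)\le n$, and in particular at the boundary $p=n$, where the terms with small degrees of freedom (down to $\chi^2_1$) are far from Gaussian and contribute $O(1)$, not $o(1)$, to the higher moments. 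The resolution is that these finitely many ill-behaved terms contribute only an $O(1)$ amount to both $\sum_k E|\cdot|^{2+\delta}$ and to $v_{n,p}$, whereas the normalization $v_{n,p}^{1+\delta/2}\sim(2\log n)^{1+\delta/2}$ diverges, so the Lyapunov ratio still vanishes; the case of bounded $p$ is handled separately as a finite sum of asymptotically Gaussian independent terms. An alternative that sidesteps the case analysis is to compute the characteristic function of $S_{n,p}$ directly, as the product of ratios of Gamma functions $E\,e^{it\log\chi^2_m}=2^{it}\Gamma(m/2+it)/\Gamma(m/2)$, and to show via Stirling's expansion that, after centering and dividing by $\sigma_{n,p}$, its logarithm converges to $-t^2/2$ uniformly in all regimes.
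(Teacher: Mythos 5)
Your reduction is exactly the paper's: Lemma \ref{StandardLma} and the Bartlett decomposition (Lemma \ref{BartlettLma}) give $\log\det(n\hat I)\stackrel{L}{=}\sum_{k=1}^p\log\chi^2_{n-k+1}$ with independent summands, and the digamma/trigamma moment formulas identify $\tau_{n,p}$ as the exact mean, so up to that point the two arguments coincide. Where you diverge is the final CLT for the triangular array: the paper works entirely with the characteristic function $\prod_k 2^{it}\Gamma(\tfrac{m}{2}+it)/\Gamma(\tfrac{m}{2})$, expands $\log\Gamma$ and $\psi$ by Stirling, and absorbs the entire error into a single ratio $r_{n,p}=\bigl(\sum_k(n-k+1)^{-2}\bigr)/\bigl(\sum_k(n-k+1)^{-1}\bigr)$, which Lemma \ref{real.lemma} shows tends to $0$ uniformly over all $p(n)\le n$ — this is the device that lets one statement cover fixed $p$, proportional $p$, and the boundary $p=n$ without case splitting. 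Your primary route via Lyapunov is also viable and more elementary, but it genuinely requires the case analysis you sketch (bounded $p$ treated as a finite sum of asymptotically Gaussian terms, small-degree-of-freedom summands at the boundary contributing $O(1)$ against a divergent $v_{n,p}^{1+\delta/2}$, plus a subsequence argument for sequences $p(n)$ with no limiting behavior), and it rests on one asserted but unproved ingredient: the bound $\mathbb{E}\lvert\log\chi^2_m-\mathbb{E}\log\chi^2_m\rvert^{2+\delta}=O\bigl(m^{-(1+\delta/2)}\bigr)$. That bound is true and easiest to nail down with $\delta=2$, where the fourth central moment equals $3[\psi'(m/2)]^2+\psi'''(m/2)=O(m^{-2})$, so you should fix $\delta=2$ and cite the polygamma expansions rather than leave the exponent generic. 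Your variance-matching step ($v_{n,p}/\sigma_{n,p}^2\to 1$ in every regime, since the discrepancy is $\sum_m O(m^{-2})$ while $\sigma_{n,p}^2$ is either bounded below or divergent whenever it does not itself absorb the error) is correct and is implicitly present in the paper's expansion as well. In short: same skeleton, correct alternative finish; the paper's characteristic-function argument buys uniformity in $p(n)$ in one stroke, while your Lyapunov argument buys transparency at the cost of the case analysis and the extra moment computation.
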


Note that Theorem \ref{CLT.thm} holds with either $p$ fixed or $p(n)$ growing with $n$, as long as $p(n) \le n$.
The assumption in Theorem \ref{CLT.thm} is generally mild. For example, it does not require that the limit of the ratio $\frac{p(n)}{n}$ exists. In particular, the theorem covers the following four special settings: (1) Fixed $p$;
(2) $\lim_{n\rightarrow \infty} \frac{p(n)}{n} = r$ for some $0 \leq r < 1$; (3) $p(n)<n$ and $\lim_{n\rightarrow \infty} \frac{p(n)}{n} = 1$; (4) The boundary case $p(n)=n$.

It is helpful to look at these special cases separately. Case (1) with fixed $p$ is the classical setting. In this case,  asymptotic normality of the determinant $\det\hat{\Sigma}$ has been well studied  \cite{anderson2003introduction,muirhead1982aspects}. For completeness, we state the result for $\log\det\hat \Sigma$ below.
\bec[Case (1): Fixed $p$]
If $p$ is fixed, then the log determinant of $\hat{\Sigma}$ satisfies
\beq
\frac{\log \det \hat \Sigma  - p(p+1)/(2n) - \log \det \Sigma}{\sqrt{2p/n }} \stackrel{L}{\longrightarrow} \mathcal{N}(0,1),  \quad\mbox{as $n\goto\infty$.}
\eeq
\eec

We now consider Case (2) where $\lim_{n\rightarrow \infty} \frac{p(n)}{n} = r$ for some $0 \leq r < 1$. 
It is easy to verify that in this case the constants $\tau_{n,p}$ and $ \sigma_{n,p} $  satisfy

\beq
\tau_{n,p} = \sum\limits_{k=1}^p \log\left(1- \frac{k}{n}\right)+O(\frac{1}{n}) \quad\mbox{and}\quad 
\sigma_{n,p} = \sqrt{-2 \log\left(1-\frac{p}{n}\right)}+O(\frac{1}{n}).
\eeq
It can be seen easily that $\tau_{n,p} \goto -\infty$ at the rate $O(n)$ when $0<r<1$.
We have the following corollary for Case (2), which reduces to \cite{jonsson1982some}.  
\bec[Case (2): $0 \leq r <1$]
\label{01.cor}
If $\lim_{n\rightarrow \infty} \frac{p(n)}{n} = r$ for some $0 \leq r < 1$, then the log determinant $\log \det \hat \Sigma$ satisfies
\beq
\frac{\log \det \hat \Sigma  - \sum_{k=1}^p \log\left(1- k/n\right)- \log \det \Sigma}{\sqrt{-2 \log\left(1-p/n\right)}} \stackrel{L}{\longrightarrow} \mathcal{N}(0,1)  \quad\mbox{as $n\goto\infty$.}
\eeq
\eec
Case (3) is more complicated. Unlike the other three cases, it cannot be reduced to a simpler form than the original Theorem \ref{CLT.thm} in general. We consider two interesting special settings: (a). ${p(n) \over n} \goto 1$ and $n-p(n)\rightarrow \infty$; (b). $n-p(n)$ is uniformly bounded. In case (a), the central limiting theorem is of the same form as in Corollary \ref{01.cor}. In case (b), the central limiting theorem is of the same form as the boundary case of $p(n)=n$ which is given as follows.
\bec[Boundary Case: $p(n) = n$]
\label{p=n.cor}
If $p(n)=n$, the log determinant $\log\det\hat{\Sigma}$ satisfies
\beq
\frac{\log \det \hat \Sigma  - \log (n-1)! + n\log n - \log \det \Sigma}{\sqrt{2\log n}} \stackrel{L}{\longrightarrow} \mathcal{N}(0,1), \quad\mbox{as $n\goto\infty$.}
\label{CLT.p=n}
\eeq
\eec

\medskip
It is interesting to note that the result given in \eqref{CLT.p=n} for the boundary case  $p(n)=n$ in fact recovers the central limit theorem for the log determinant of a random Gaussian matrix $Y = (y_{ij})_{n\times n}$ with iid $\mathcal{N}(0,1)$ entries $y_{ij}$,
\beq
\label{Gaussian.CLT}
\frac{\log |\det Y| - \frac{1}{2}\log (n-1)!}{\sqrt{\frac{1}{2} \log n}} \stackrel{L}{\longrightarrow} \mathcal{N}(0,1), \quad\mbox{as $n\goto\infty$.}
\eeq
See, for example,  \cite{goodman1963distribution} and \cite{nguyen2011random}. This can be seen as follows. When $p=n$,  it can be verified directly that the log determinant $\log \det \hat \Sigma$ satisfies
\begin{equation}
\log\det\hat{\Sigma}+ n \log n - \log\det \Sigma=\log\det (Y^TY)  = 2 \log |\det Y|
\end{equation}
where $Y$ is an $n \times n$ random matrix whose entries are independent standard Gaussian variables. Thus Corollary \ref{p=n.cor} yields
\beq
\frac{2 \log |\det Y| - \log (n-1)!}{\sqrt{2\log n}} \stackrel{L}{\longrightarrow} \mathcal{N}(0,1),
\eeq
which is equivalent to \eqref{Gaussian.CLT}.

\section{Estimation of Log-Determinant and Differential Entropy}
\label{estimation.sec}

As mentioned in the introduction, the log-determinant of the covariance matrix and differential entropy are important in many applications. In this section, we consider optimal estimation of the log-determinant and differential entropy of high-dimensional Gaussian distributions. Both minimax upper and lower bounds are given and the results yield sharp asymptotic  minimaxity.

Suppose we observe $X_{1},\ldots ,X_{n+1}\stackrel{iid}{\sim}  \mathcal{N}_p(\mu, \Sigma)$.
Based on the central limit theorem for $\log \det \hat \Sigma$ given in Theorem \ref{CLT.thm},
we consider the following estimator for the log determinant $T=\log \det\Sigma$ of the covariance matrix $\Sigma$, 
\begin{equation}
\hat{T} = \log \det \hat{\Sigma} - \tau_{n,p} \label{Estimator}
\end{equation}
and the corresponding estimator of the differential entropy $\mathcal{H}(\Sigma)$ given by
\begin{equation}
\widehat{\mathcal{H}(\Sigma)} = \frac{p}{2}+\frac{p\log(2\pi)}{2}+\frac{\log \det \hat \Sigma}{2}-{\tau_{n,p}\over 2}.
\label{Entropy.est}
\end{equation}
Here the constant $\tau_{n,p}$ as defined in \eqref{tau} can be viewed as a bias correction term. The estimators \eqref{Estimator} and \eqref{Entropy.est} have been studied in \cite{ahmed1989entropy, misra2005estimation} in the fixed dimensional setting. It was shown to be a UMVUE in  \cite{ahmed1989entropy} and inadmissible in \cite{misra2005estimation}.  
When the dimension $p$ is fixed, the bias correction term $\tau_{n,p}$ is of order $1\over n$ and is thus negligible. In particular, the log-determinant of the sample covariance matrix $ \log \det \hat{\Sigma}$ is asymptotically unbiased as an estimator of $ \log \det{\Sigma}$. Here we consider the estimator in the high dimensional setting where the dimension $p$ can grow with $n$ under the only  restriction $p(n) \le n$. The bias correction term $\tau_{n,p}$ plays a much more prominent role in such a setting because as discussed in Section \ref{CLT.sec},  $\tau_{n,p}$ is of order $n$ when $\lim_{n\rightarrow \infty} \frac{p(n)}{n} = r $ for some $0 < r < 1$.

In this section, we focus on the asymptotic behavior and optimality of the estimators $\hat{T}$ and $\widehat{\mathcal{H}(\Sigma)} $.  We establish a non-asymptotic upper bound for mean square error, a minimax lower bound and the optimal rate of convergence as well as sharp asymptotic  minimaxity for the estimators $\hat{T}$ and $\widehat{\mathcal{H}(\Sigma)} $ in the following two subsections. Since the  log determinant $\log \det\Sigma$ and  the differential entropy $\mathcal{H}(\Sigma)$ only differ by a constant in the Gaussian case, the two estimation problems are essentially the same. We shall focus on  estimation of $\log \det\Sigma$  in the rest of this section.

\subsection{Upper Bound}

We begin by giving  a non-asymptotic upper bound for the mean squared error of the estimator $\hat{T}$.
\begin{theorem}[Non-Asymptotic Upper Bound]
\label{NAsymUpperThm}
Suppose $p \leq n$. Let the estimator $\hat{T}$ be defined in \eqref{Estimator}. Then the risk of $\hat T$ satisfies
\begin{equation}
\label{upp.bnd}
\mathbb{E} \left( \hat{T} - \log \det \Sigma \right)^2 \leq -2 \log\left( 1- \frac{p}{n} \right) + \frac{10p}{3n} \cdot \frac{1}{n-p}.
\end{equation}
\end{theorem}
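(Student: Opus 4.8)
The plan is to reduce the mean squared error to an \emph{exact} expression in the trigamma function and then bound that expression by elementary integral comparisons. First I would invoke the Bartlett (Cholesky) decomposition of the Wishart distribution. Since $X_1,\ldots,X_{n+1}\stackrel{iid}{\sim}\mathcal N_p(\mu,\Sigma)$, the matrix $n\hat\Sigma=\sum_{k=1}^{n+1}(X_k-\bar X)(X_k-\bar X)^T$ has the Wishart law $W_p(\Sigma,n)$ with $n\ge p$ degrees of freedom, so that
\[
\frac{n^p\det\hat\Sigma}{\det\Sigma}\stackrel{d}{=}\prod_{k=1}^p\chi^2_{n-k+1},
\]
the chi-square factors being mutually independent. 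Taking logarithms yields the key representation $\log\det\hat\Sigma-\log\det\Sigma\stackrel{d}{=}\sum_{k=1}^p\log\chi^2_{n-k+1}-p\log n$, which is exactly the identity underlying Theorem \ref{CLT.thm}.

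Next I would compute the first two moments using $\mathbb E\log\chi^2_\nu=\log 2+\psi(\nu/2)$ and $\mathrm{Var}(\log\chi^2_\nu)=\psi'(\nu/2)$. Matching the mean of the representation above against the definition of $\tau_{n,p}$ in \eqref{tau} shows that $\hat T=\log\det\hat\Sigma-\tau_{n,p}$ is \emph{exactly} unbiased for $\log\det\Sigma$. Hence the mean squared error equals the variance, and by independence of the chi-square factors,
\[
\mathbb E\big(\hat T-\log\det\Sigma\big)^2=\sum_{k=1}^p\psi'\!\left(\frac{n-k+1}{2}\right).
\]
This exact identity is the crux: the problem is now reduced to bounding a sum of trigamma values.

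For the bound I would use the standard trigamma estimate $\psi'(x)\le\frac{1}{x}+\frac{1}{2x^2}+\frac{1}{6x^3}$ for $x>0$, which comes from the Euler--Maclaurin expansion of $\psi'(x)=\sum_{j\ge0}(x+j)^{-2}$ (the first omitted term, $-\frac{1}{30}x^{-5}$, is negative, so the truncation overestimates). Setting $x=(n-k+1)/2$ converts the three pieces into sums of $\frac{2}{n-k+1}$, $\frac{2}{(n-k+1)^2}$, and $\frac{4}{3(n-k+1)^3}$. Comparing each with the integral of the corresponding decreasing function over $[n-p,n]$ gives
\[
\sum_{k=1}^p\frac{2}{n-k+1}\le -2\log\Big(1-\tfrac pn\Big),\qquad
\sum_{k=1}^p\frac{2}{(n-k+1)^2}\le\frac{2p}{n(n-p)},
\]
and $\sum_{k=1}^p\frac{4}{3(n-k+1)^3}\le\frac{4p}{3n(n-p)}$, where the cubic bound uses $n-k+1\ge n-p+1\ge1$. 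Adding the three contributions produces exactly $-2\log(1-p/n)+\frac{10p}{3n(n-p)}$, the claimed estimate, with the coefficient $\frac{10}{3}=2+\frac43$ reflecting precisely the two correction terms of $\psi'$.

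The probabilistic content here --- the Wishart/Bartlett representation and the exact mean and variance --- is standard and is the same machinery that powers Theorem \ref{CLT.thm}, so I would expect to reuse it directly. The only genuinely delicate point is the analytic step: establishing $\psi'(x)\le\frac1x+\frac1{2x^2}+\frac1{6x^3}$ for all arguments $x=(n-k+1)/2\ge\frac12$ that arise (guaranteed by the completely monotone, alternating-remainder structure of the Euler--Maclaurin series), and keeping every integral comparison pointed in the correct direction so that the error terms assemble exactly into $\frac{10p}{3n(n-p)}$ rather than a looser multiple.
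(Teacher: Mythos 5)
Your proposal is correct and follows essentially the same route as the paper: reduce via the Bartlett decomposition to the exact identity $\mathbb{E}(\hat T-\log\det\Sigma)^2=\sum_{k=1}^p\psi'\bigl(\tfrac{n-k+1}{2}\bigr)$, apply the trigamma upper bound $\psi'(x)\le \tfrac1x+\tfrac1{2x^2}+\tfrac1{6x^3}$, and assemble the three resulting sums into $-2\log(1-p/n)+\tfrac{10p}{3n(n-p)}$. The only (immaterial) difference is that you bound $\sum\tfrac{2}{n-k+1}$ and $\sum\tfrac{2}{(n-k+1)^2}$ by integral comparison, while the paper uses the telescoping inequalities $\tfrac{2}{m}\le-2\log(1-\tfrac1m)$ and $\tfrac{1}{m^2}\le\tfrac{1}{m(m-1)}$; both give identical constants.
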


The proof of this theorem is connected to that of Theorem \ref{CLT.thm} as it can be seen intuitively that
\beq
\mathbb{E} \left( \hat{T} - \log \det \Sigma \right)^2 \sim \sigma^2_{n,p} = \sum_{k=1}^p \frac{2}{n-k+1} \leq -2 \log\left( 1- \frac{p}{n} \right)
\eeq
which yields the dominate term in \eqref{upp.bnd}. The higher order term on the right hand side of \eqref{upp.bnd} can be worked out explicitly using Taylor expansion with the remainder term. The detailed proof including derivation of the higher order term is given in Section \ref{proof.sec}.

\subsection{Asymptotic Optimality}

Theorem \ref{NAsymUpperThm} gives an upper bound for the risk of the estimator $\hat T$. We now establish  the optimal rate of convergence for estimating  $\log \det\Sigma$ by obtaining a minimax lower bound using the Information Inequality. The results show that the estimator $\hat T$ is asymptotically sharp minimax in the case $\lim\limits_{n\rightarrow \infty} \frac{p(n)}{n} = 0$. 

\begin{theorem}[Non-Asymptotic Information Bound]
\label{CRLowerThm}
Let  $X_{1},\ldots,X_{n+1} \stackrel{iid}{\sim} \mathcal{N}_p(\mu, \Sigma)$.  Suppose $p \leq n$. Then the minimax risk for estimating $\log \det\Sigma$ satisfies
\begin{equation}
\inf_{\delta} \sup_{\Sigma} \mathbb{E} (\delta- \log \det \Sigma)^2 \geq 2 \cdot \frac{p}{n}.
\end{equation}
where the infimum is taken over all measurable estimators $\delta$ and the supreme is taken over all the possible positive definite covariance matrix $\Sigma$.
\end{theorem}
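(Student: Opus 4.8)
The plan is to reduce the full minimax problem to a one-parameter submodel on which the information inequality can be applied explicitly. First I would restrict the supremum to the isotropic family $\Sigma = \sigma^2 I_p$ with $\sigma^2 > 0$, so that $\log\det\Sigma = p\log\sigma^2$ and
\beq
\inf_\delta \sup_\Sigma \mathbb{E}(\delta - \log\det\Sigma)^2 \geq \inf_\delta \sup_{\sigma^2 > 0} \mathbb{E}_{\sigma^2}(\delta - p\log\sigma^2)^2 .
\eeq
Since the estimation problem is invariant under the translations $X_k \mapsto X_k + a$, which only shift the nuisance mean $\mu$ and leave the Wishart matrix $W := n\hat\Sigma$ unchanged, the Hunt--Stein theorem (the translation group being amenable) lets me restrict to translation-invariant procedures, which depend on the data only through $W \sim W_p(\sigma^2 I_p, n)$. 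The marginal law of $W$ carries exactly $n$ degrees of freedom, and it is this loss of one degree of freedom to the unknown mean that produces the sharp constant.

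On this reduced model I would compute the Fisher information for $\sigma^2$ from $W$. Writing $f(W)$ for the Wishart density, the score is $\frac{\partial}{\partial\sigma^2}\log f = \frac{\text{tr}(W)}{2\sigma^4} - \frac{np}{2\sigma^2}$, and since $\text{tr}(W)/\sigma^2 \sim \chi^2_{np}$ has variance $2np$, one gets $I(\sigma^2) = np/(2\sigma^4)$. It is cleaner to pass to the parameter $\theta = \log\sigma^2$, for which the target $T = \log\det\Sigma = p\theta$ is linear and the information $I(\theta) = I(\sigma^2)(d\sigma^2/d\theta)^2 = np/2$ is constant in $\theta$; these two facts are what make the final constant come out exactly as $2p/n$.

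Because the Cramér--Rao inequality in its basic form bounds only unbiased estimators, the final and most delicate step is to turn it into a genuine minimax bound valid for every estimator. For this I would use the van Trees (Bayesian Cramér--Rao) inequality: for any smooth prior density $\pi$ on $\theta$ supported on an interval,
\beq
\int \mathbb{E}_\theta(\delta - p\theta)^2 \, d\pi(\theta) \geq \frac{\left(\int p \, d\pi\right)^2}{\int I(\theta)\, d\pi + I(\pi)} = \frac{p^2}{np/2 + I(\pi)},
\eeq
where $I(\pi)$ is the Fisher information of $\pi$. Taking a sequence of priors $\pi_L(\theta) = L^{-1} q(\theta/L)$ obtained by rescaling a fixed smooth density $q$, so that $I(\pi_L) = L^{-2} I(q) \to 0$, the right-hand side increases to $p^2/(np/2) = 2p/n$. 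Since the maximum risk dominates every Bayes risk, for each $\delta$ we obtain $\sup_{\sigma^2}\mathbb{E}(\delta - p\log\sigma^2)^2 \geq 2p/n$, and taking the infimum over $\delta$ gives the claim. The main obstacles are the justification of the degrees-of-freedom reduction in the presence of the unknown mean (a naive full-data information bound would give only the weaker $2p/(n+1)$) and the limiting prior argument needed to extract the exact constant rather than an approximate one.
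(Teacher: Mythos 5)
Your proposal is correct, but it reaches the bound by a genuinely different route than the paper. The paper works with the $p$-parameter diagonal subfamily $\Sigma={\rm diag}(\theta_1,\dots,\theta_p)$, applies the biased (multivariate) Cram\'er--Rao inequality to get the pointwise bound $\sum_k\bigl[\tfrac{2}{n}(1+\theta_k b'(\theta_k))^2+b^2(\theta_k)\bigr]$, and then disposes of the bias by an elementary Brown--Gajek-type argument: integrating the constraint $|1+\theta b'(\theta)|\le\sqrt{n r_K/2}$ over $[1/K,K]$ forces $r_K\ge\tfrac{2}{n}\bigl(\tfrac{\log K}{\log K+\sqrt{2/n}}\bigr)^2$, and $K\to\infty$ gives $2/n$ per coordinate. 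You instead collapse to the one-parameter isotropic family $\sigma^2 I_p$ (which suffices because the target $p\log\sigma^2$ already carries the factor $p$, yielding $p^2/(np/2)=2p/n$), and you replace the bias argument by the van Trees inequality with flattening priors on $\theta=\log\sigma^2$ --- a cleaner and more standard way to convert an information bound into a minimax bound. The one place where you are actually more careful than the paper is the degrees-of-freedom issue: the theorem has $n+1$ observations with unknown mean, the naive full-model information bound gives only $2p/(n+1)$, and the paper silently passes to $n$ zero-mean observations, whereas you justify the reduction to the $n$-degree-of-freedom Wishart matrix via Hunt--Stein. That step is essentially right (the translation group is amenable and the loss is invariant), though you should add the small Rao--Blackwell/sufficiency remark that invariant procedures, which a priori depend on the full maximal invariant (the centered data), can be reduced to functions of $W$ because $W$ is sufficient for the law of the maximal invariant. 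In short: both proofs are valid; the paper's is more elementary and self-contained, yours trades the ad hoc bias lemma for van Trees and makes the $n$-versus-$n+1$ reduction explicit.
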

The proof of Theorem \ref{CRLowerThm} is given in Section \ref{proof.sec}. A major tool is the Cramer-Rao Information Inequality.
Together with the upper bound given in \eqref{upp.bnd}, we have the following asymptotic optimality result.

\begin{theorem}[Asymptotic Optimality]
\label{AsympOptThm}
Let  $X_{1},\ldots ,X_{n+1} \stackrel{iid}{\sim} \mathcal{N}_p(\mu, \Sigma)$.  Suppose that $ n \rightarrow \infty$,  $p(n) \leq n$ and $n-p(n) \rightarrow \infty$. Then
\begin{equation}
\label{upper.lower.bnds}
2 \cdot \liminf_{n\rightarrow \infty} \frac{p}{n} \leq \varliminf_{n \rightarrow \infty} \inf_{\delta} \sup_{\Sigma} \mathbb{E} (\delta - \log \det \Sigma)^2 \leq \varlimsup_{n \rightarrow \infty} \inf_{\delta} \sup_{\Sigma} \mathbb{E} (\delta - \log \det \Sigma)^2 \leq  \limsup_{n\rightarrow \infty}\left(-2\log\left( 1- \frac{p}{n}\right)\right).
\end{equation}
In particular, if ${p(n)\over n} \goto 0$, then the minimax risk satisfies
\begin{equation}
\lim_{n \rightarrow \infty}  \frac{n}{p} \cdot \inf_{\delta} \sup_{\Sigma}  \mathbb{E} (\delta - \log \det \Sigma)^2 = 2 
\end{equation}
and the estimator $\hat T$ defined in \eqref{Estimator} is asymptotically sharp minimax.
\end{theorem}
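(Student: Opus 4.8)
The plan is to derive Theorem \ref{AsympOptThm} purely by combining the non-asymptotic upper bound of Theorem \ref{NAsymUpperThm} with the information lower bound of Theorem \ref{CRLowerThm}, followed by elementary limit arithmetic; no new probabilistic argument is required. The one place where the hypothesis $n-p(n)\to\infty$ enters is in showing that the higher-order remainder in \eqref{upp.bnd} is asymptotically negligible compared with the leading term.

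First I would establish the chain \eqref{upper.lower.bnds} term by term. The middle inequality $\varliminf\le\varlimsup$ is automatic. For the leftmost inequality, Theorem \ref{CRLowerThm} gives $\inf_\delta\sup_\Sigma\mathbb{E}(\delta-\log\det\Sigma)^2\ge 2p/n$ for every $n$ with $p\le n$, so passing to the lower limit yields $\varliminf_n\inf_\delta\sup_\Sigma\mathbb{E}(\delta-\log\det\Sigma)^2\ge 2\liminf_n(p/n)$. For the rightmost inequality I would use $\hat{T}$ from \eqref{Estimator} as a feasible competitor: since the bound \eqref{upp.bnd} is uniform in $\Sigma$, we get $\inf_\delta\sup_\Sigma\mathbb{E}(\delta-\log\det\Sigma)^2\le\sup_\Sigma\mathbb{E}(\hat{T}-\log\det\Sigma)^2\le -2\log(1-p/n)+\frac{10p}{3n}\cdot\frac{1}{n-p}$. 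Here the remainder is controlled using $p/n\le 1$, which gives $0\le\frac{10p}{3n}\cdot\frac{1}{n-p}\le\frac{10}{3(n-p)}\to 0$ under $n-p\to\infty$; applying subadditivity of $\varlimsup$ then produces $\varlimsup_n\inf_\delta\sup_\Sigma\mathbb{E}(\delta-\log\det\Sigma)^2\le\limsup_n(-2\log(1-p/n))$, which closes the chain.

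For the ``in particular'' statement I would specialize to $p(n)/n\to 0$ (which, with $n\to\infty$, forces $n-p\to\infty$, so the previous display applies). The lower bound gives $\frac{n}{p}\inf_\delta\sup_\Sigma\mathbb{E}(\delta-\log\det\Sigma)^2\ge 2$ directly. For the matching upper bound I would Taylor-expand $-2\log(1-p/n)=2\frac{p}{n}+(p/n)^2+O((p/n)^3)$, so that $\frac{n}{p}\bigl(-2\log(1-p/n)\bigr)=2+\frac{p}{n}+o(1)\to 2$, while $\frac{n}{p}\cdot\frac{10p}{3n}\cdot\frac{1}{n-p}=\frac{10}{3(n-p)}\to 0$. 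Combining the two bounds gives $\lim_n\frac{n}{p}\inf_\delta\sup_\Sigma\mathbb{E}(\delta-\log\det\Sigma)^2=2$, and since $\hat{T}$ attains the rate $\frac{n}{p}\sup_\Sigma\mathbb{E}(\hat{T}-\log\det\Sigma)^2\to 2$ uniformly in $\Sigma$, it is asymptotically sharp minimax.

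I expect the only real care to be in the bookkeeping of the remainder term across the different regimes covered by the blanket hypothesis $n-p\to\infty$: when $p/n\to r$ with $0\le r<1$ the leading term stays bounded and the correction plainly vanishes, whereas when $p/n\to 1$ the leading term $-2\log(1-p/n)$ diverges and one must verify that $\frac{10}{3(n-p)}$ is genuinely of smaller order throughout. Once the bound $\frac{10p}{3n(n-p)}\le\frac{10}{3(n-p)}$ is in hand this is routine, so the result follows from the two established theorems with no substantive new obstacle.
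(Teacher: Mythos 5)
Your proposal is correct and follows essentially the same route as the paper: the chain \eqref{upper.lower.bnds} is obtained by combining the information lower bound of Theorem \ref{CRLowerThm} with the non-asymptotic upper bound of Theorem \ref{NAsymUpperThm} applied to the feasible estimator $\hat{T}$, using $n-p\to\infty$ to kill the remainder $\frac{10p}{3n(n-p)}$, and the sharp constant $2$ in the case $p/n\to 0$ follows from $-2\log(1-p/n)\sim 2p/n$. Your extra bookkeeping on the remainder term across regimes and the observation that $p/n\to 0$ already implies $n-p\to\infty$ are minor refinements of the same argument.
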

Assume $\lim_{n\rightarrow \infty} \frac{p(n)}{n} = r \in [0,1)$. In the case of $r=0$, Theorem \ref{AsympOptThm} shows that the optimal constant in the asymptotic risk is $2$ and that the estimator $\hat T$ given in \eqref{Estimator} attains both the optimal rate and the optimal constant asymptotically.  It is thus asymptotically sharp minimax.  
 When $0<r<1$,  the theorem also shows that the minimax risk is non-vanishing and is bounded between $2r$ and $-2\log(1-r)$. It is thus not possible to estimate $\log \det\Sigma$ consistently under the squared error loss in this case.

\begin{remark}{\rm
We have focused on the case $0 \le r < 1$ in Theorem \ref{AsympOptThm}.  
When $r=1$,  Theorem \ref{CRLowerThm} shows that
\begin{equation}
\varliminf_{n \rightarrow \infty}\inf_{\delta} \sup_{\Sigma} \mathbb{E} (\delta- \log \det \Sigma)^2 \geq 2.
\end{equation}
So consistent estimation of $\log \det \Sigma$ under mean squared error is not possible. If $r=1$ and $n-p$ is uniformly bounded, then
\[
2 \cdot \frac{p}{n} \leq  \inf_{\delta} \sup_{\Sigma} \mathbb{E} (\delta - \log \det \Sigma)^2 \le  c \log n
\]
for some positive constant $c$, which can be taken as $2$ as $n\goto \infty$.
}
\end{remark}

In terms of estimating the differential entropy  $\mathcal{H}(\Sigma)$, 
 the entropy estimator $\widehat{\mathcal{H}(\Sigma)}$ defined in \eqref{Entropy.est} is asymptotic optimal when ${p(n)\over n}\goto 0$, which means that in the asymptotic sense, $\widehat{\mathcal{H}(\Sigma)}$ is the optimal minimax estimator.

\section{Discussions}
\label{discussion.sec}

In this paper, we have focused on estimating the log determinant in the ``moderately" high dimensional setting under the restriction that $p(n)\le n$. The lower bound given in Theorem \ref{CRLowerThm} shows that it is not possible to estimate the log determinant consistently when ${p(n)\over n} \goto r > 0$. It is a common practice in high dimensional statistical inference to impose structural assumption such as sparsity on the parameter in order to effectively estimate the quantity of interest. In the context of covariance matrix estimation, commonly considered collections include  bandable covariance matrices, sparse covariance matrices, and Toeplitz covariance matrix. See, for example, \cite{cai2010optimal}, \cite{cai2012sparse}, and \cite{cai2012toeplitz}. It is interesting to see if the log determinant can be well estimated in the high dimensional case with $p(n) > n$ under one of these structural constraints. The answer is unfortunately negative. 

For any constant  $K>1$, define the following collection of $p$-dimensional bounded diagonal covariance matrices,
\beq
\mathcal{D}_K = \{{\rm diag}(\overbrace{a, a, \cdots, a}^{p}): 1/K \leq a \leq K\}.
\eeq
When $p(n)> n$, the following minimax lower bound shows that it is not possible to accurately estimate the log determinant even for the simple diagonal matrices in $\mathcal{D}_K$.
\begin{theorem}[Minimax Lower Bounds]
\label{MinimaxLowerThm}
Let  $X_{1},\ldots ,X_{n+1} \stackrel{iid}{\sim} \mathcal{N}_p(\mu, \Sigma)$. The minimax risk of estimating the log determinant of the covariance matrix $\Sigma$ over the collection  $\mathcal{D}_K$ of bounded diagonal matrices 
satisfies, 
\begin{equation}
\inf_{\delta}\sup_{\Sigma\in \mathcal{D}_K} \mathbb{E} \left( \delta - \log \det \Sigma \right)^2 \ge C_K \cdot \frac{p}{n},
\end{equation}
for all $n, p$, where $C_K$ is a constant satisfies $0 < C_K \leq 2$.
\end{theorem}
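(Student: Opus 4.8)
The plan is to exploit the fact that over $\mathcal{D}_K$ the problem collapses to a one-dimensional scale problem and then apply Le Cam's two-point method. For any $\Sigma = aI_p \in \mathcal{D}_K$ we have $\log\det\Sigma = p\log a$, so estimating the log determinant is exactly estimating $p\log a$ for a scalar $a\in[1/K,K]$. This reduction makes the $p/n$ rate transparent: a single scale parameter is probed by $n+1$ independent $p$-dimensional Gaussian vectors, so the effective sample size scales like $np$ while the functional itself carries a factor $p$.

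Concretely, I would choose two parameter points with a common mean $\mu_0$ and covariances $\Sigma_0 = a_0 I_p$, $\Sigma_1 = a_1 I_p$ in $\mathcal{D}_K$, writing $h := \log(a_1/a_0)>0$, so the two functional values are separated by $|p\log a_1 - p\log a_0| = ph$; since these are genuine points of the parameter space, the two-point bound applies directly to the full minimax risk. The next step is to control the statistical distance between the product measures $P_i = \mathcal{N}(\mu_0, a_i I_p)^{\otimes(n+1)}$. Using the closed-form Gaussian divergence and additivity over the $n+1$ samples,
$$\mathrm{KL}(P_0\|P_1) = \frac{(n+1)p}{2}\left(\frac{a_0}{a_1}-1+\log\frac{a_1}{a_0}\right) = \frac{(n+1)p}{2}\bigl(e^{-h}-1+h\bigr),$$
which behaves like $\tfrac14 (n+1)p\,h^2$ for small $h$. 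Le Cam's two-point inequality then yields
$$\inf_\delta\sup_{\Sigma\in\mathcal{D}_K}\mathbb{E}(\delta-\log\det\Sigma)^2 \ \ge\ \frac{(ph)^2}{4}\bigl(1-\mathrm{TV}(P_0,P_1)\bigr),$$
and I would bound $\mathrm{TV}$ above by Pinsker's inequality, $\mathrm{TV}\le\sqrt{\mathrm{KL}/2}$, so that choosing $h$ to keep $\mathrm{KL}$ a fixed constant below $2$ makes the right-hand factor bounded away from zero.

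The remaining step is to select $h$ optimally subject to $a_0,a_1\in[1/K,K]$. Taking $h\asymp 1/\sqrt{np}$ pins $\mathrm{KL}$ at a constant while the squared separation $(ph)^2\asymp p^2/(np)=p/n$ delivers the claimed order; because this $h$ tends to $0$, the range constraint is automatically satisfied. The resulting constant is at most the intrinsic scale-family constant $2$, which also explains the stated upper bound $C_K\le 2$: the Fisher information for $\theta=\log a$ from the full data is of order $np/2$, and the functional $g=p\theta$ has $g'=p$, so the Cramer-Rao variance is bounded below by $p^2/(np/2)=2p/n$ and no lower bound can carry a constant larger than $2$.

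The hard part will be making the bound hold uniformly for \emph{all} $n$ and $p$ rather than only asymptotically, since when $np$ is so small that the $\mathrm{KL}$-driven choice $h\asymp 1/\sqrt{np}$ would push $a_1$ outside $[1/K,K]$, the range constraint becomes binding. In that regime I would instead cap the perturbation at the extreme points $a_0=1/K$, $a_1=K$ (so $h=2\log K$); there $np$ is necessarily bounded, hence $\mathrm{KL}$ stays bounded and $1-\mathrm{TV}$ bounded below, while the separation $p\cdot 2\log K$ still dominates $\sqrt{p/n}$ because $np\lesssim 1$ forces $p/n\lesssim p^2$. Carrying out this two-regime split and collapsing both cases into a single positive constant $C_K$ depending only on $K$, with $C_K\le 2$, is the main bookkeeping obstacle.
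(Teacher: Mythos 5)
Your proposal is correct and follows essentially the same route as the paper's proof: a Le Cam two-point argument on scalar multiples of the identity with perturbation of size $h \asymp 1/\sqrt{np}$ (the paper takes $\Sigma_0=I$, $\Sigma_1=(1+1/\sqrt{np})I$), a closed-form Gaussian divergence bound combined with a Pinsker-type inequality, and a separate treatment of the finitely many $(n,p)$ pairs where the range constraint $[1/K,K]$ binds. The only cosmetic difference is that the paper computes the $\chi^2$ distance via a determinant identity rather than the KL divergence before feeding it into the same affinity bound.
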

The proof of this minimax lower bound is given in Section \ref{proof.sec} using Le Cam's method. Theorem \ref{MinimaxLowerThm} shows that when $p(n) > n$ it is not possible to estimate consistently the bounded diagonal matrices in $\mathcal{D}_K$. Since all the reasonable collections of covariance matrices including the three collections mentioned earlier contain $\mathcal{D}_K$ as a subset, it is thus also impossible to estimate $\log \det \Sigma$ consistently over those commonly used collection of covariance matrices when the dimension is larger than the sample size.

In addition to the differential entropy considered in this paper, estimating the log determinant of  covariance matrices is needed for many other applications. One common problem in statistics and engineering is to estimate the distance between two population distributions based on the samples. A commonly used measure of closeness is the relative entropy or the Kullback-Leiber Divergence. For two distributions $\mathbb{P}$ and $\mathbb{Q}$ with respective density functions $p(\cdot)$ and $q(\cdot)$, the relative entropy between  $\mathbb{P}$ and $\mathbb{Q}$ is
\begin{equation}
KL(\mathbb{P},\mathbb{Q}) = \int p(x) \log \frac{p(x)}{q(x)} dx.
\end{equation}
In the case of two multivariate Gaussian distributions $\mathbb{P}=\mathcal{N}_p(\mu_1, \Sigma_1)$ and $\mathbb{Q}=\mathcal{N}_p(\mu_2, \Sigma_2)$, 
\begin{equation}
KL(\mathbb{P},\mathbb{Q}) = \frac{1}{2} \left( {\rm tr}\left( \Sigma_2^{-1}\Sigma_1\right) - p + (\mu_2 -\mu_1)^T \Sigma_2^{-1} (\mu_2 -\mu_1) + \log \left( \frac{\det \Sigma_1}{\det \Sigma_2}\right)\right).
\label{KL}
\end{equation}
From (\ref{KL}), it is clear that estimation of the relative entropy involves estimation of the  log determinants $\log\det\Sigma_1$ and $\log\det\Sigma_2$. The results given in this paper can be readily used for this part of the estimation problem.

The estimation results obtained in the present paper can also be applied for testing  the hypothesis that two multivariate Gaussian distributions $\mathbb{P}=\mathcal{N}_p(\mu_1, \Sigma_1)$ and $\mathbb{Q}=\mathcal{N}_p(\mu_2, \Sigma_2)$ have the same entropy,
\begin{equation}
H_0: \mathcal{H}(\mathbb{P}) = \mathcal{H}(\mathbb{Q}) \quad\mbox{vs.}\quad  H_1: \mathcal{H}(\mathbb{P}) \neq \mathcal{H}(\mathbb{Q}).
\end{equation}
For any given significance level $0<\alpha<1$, a test with the asymptotic level $\alpha$ can be easily constructed using the central limit theorem given in Section \ref{CLT.sec}, based on two independent samples, one from  $\mathbb{P}$ and another from  $\mathbb{Q}$.

Knowledge of the log determinant of covariance matrices is also essential for the quadratic discriminant analysis (QDA). For classification of two multivariate Gaussian distributions $\mathcal{N}_p(\mu_1, \Sigma_1)$ and $\mathcal{N}_p(\mu_2, \Sigma_2)$, when the parameters $\mu_1, \mu_2, \Sigma_1$ and $\Sigma_2$ are known, the oracle discriminant is
\begin{equation}
\label{QDA}
\Delta = -(z-\mu_1)^T\Sigma_1^{-1}(z-\mu_1)+(z-\mu_2)^T\Sigma_2^{-1}(z-\mu_2) - \log\left( \frac{\det\Sigma_1}{\det\Sigma_2} \right).
\end{equation}
That is, the observation $z$ is classified into the population with $\mathcal{N}_p(\mu_1, \Sigma_1)$ distribution if $\Delta > 0$ and into $\mathcal{N}_p(\mu_2, \Sigma_2)$ otherwise. In applications, the parameters are unknown and the oracle discriminant needs to be estimated from data. One of the importantr quantities in \eqref{QDA} involves the log determinants. Efficient estimation of $\log\det\Sigma_1 - \log\det\Sigma_2$ leads to a better QDA rule.

\section{Proofs}
\label{proof.sec}

We give the proofs of the main results in this section. We begin by collecting two basic but important  lemmas for the proof of Theorem  \ref{NAsymUpperThm}.

\begin{lemma}
\label{StandardLma}
Let $X_{1},\ldots,X_{n+1} \stackrel{iid}{\sim} \mathcal{N}_p(\mu, \Sigma)$ with $p\leq n$. Denote the sample covariance matrix by $\hat{\Sigma}$. Then
\begin{equation}
\log \det \hat{\Sigma} - \log \det \Sigma = \log \det \hat{I}
\end{equation}
where $\hat{I} = \frac{1}{n} \sum_{k=1}^n Y_k Y_k^T$ is the sample covariance matrix for independent and identically distributed $p$-variate Gaussian random variables $Y_{1},\ldots,Y_{n} \stackrel{iid}{\sim} \mathcal{N}_p(0, I)$, $I$ is the identity matrix.
\end{lemma}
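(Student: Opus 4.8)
The plan is to reduce the statement (which is to be read as an equality in distribution, since $\hat{I}$ is built from fresh variables $Y_k$) to the standard-Gaussian case by whitening, and then to dispose of the mean-centering via an orthogonal (Helmert) transformation that turns the centered sum of $n+1$ observations into an uncentered sum of $n$ observations.

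First I would whiten the data. Set $Z_k = \Sigma^{-1/2}(X_k - \mu)$ for $k=1,\ldots,n+1$, so that $Z_1,\ldots,Z_{n+1} \stackrel{iid}{\sim} \mathcal{N}_p(0,I)$. Since $\bar{Z} = \Sigma^{-1/2}(\bar{X}-\mu)$ and $X_k - \bar{X} = \Sigma^{1/2}(Z_k - \bar{Z})$, the two sample covariance matrices are related by $\hat{\Sigma} = \Sigma^{1/2}\hat{\Sigma}_Z\Sigma^{1/2}$, where $\hat{\Sigma}_Z = \frac{1}{n}\sum_{k=1}^{n+1}(Z_k-\bar{Z})(Z_k-\bar{Z})^T$. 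Taking determinants and using multiplicativity gives $\det\hat{\Sigma} = \det\Sigma\cdot\det\hat{\Sigma}_Z$, hence the pointwise identity $\log\det\hat{\Sigma} - \log\det\Sigma = \log\det\hat{\Sigma}_Z$. This reduces the problem to showing that $\hat{\Sigma}_Z$ has the same law as $\hat{I}$.

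The remaining and principal step is the degrees-of-freedom reduction: I would show that the centered sum of $n+1$ standard Gaussian outer products has the same distribution as the uncentered sum of $n$ of them (both equal to $W_p(I,n)$). Arrange the whitened vectors as the columns of the $p\times(n+1)$ matrix $\mathbf{Z} = [Z_1,\ldots,Z_{n+1}]$ and let $H$ be an $(n+1)\times(n+1)$ orthogonal (Helmert) matrix whose first column is $\frac{1}{\sqrt{n+1}}(1,\ldots,1)^T$. Because the columns of $\mathbf{Z}$ are iid $\mathcal{N}_p(0,I)$ and $H$ is orthogonal, the columns $\tilde{Z}_1,\ldots,\tilde{Z}_{n+1}$ of $\mathbf{Z}H$ are again iid $\mathcal{N}_p(0,I)$. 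Writing the centering matrix as $C = I_{n+1} - \frac{1}{n+1}\mathbf{1}\mathbf{1}^T$, one has $\sum_{k=1}^{n+1}(Z_k-\bar{Z})(Z_k-\bar{Z})^T = \mathbf{Z}C\mathbf{Z}^T$, and since $C$ is the orthogonal projection onto the complement of the first Helmert direction, $H^T C H = \mathrm{diag}(0,1,\ldots,1)$. Therefore $\mathbf{Z}C\mathbf{Z}^T = (\mathbf{Z}H)\,\mathrm{diag}(0,1,\ldots,1)\,(\mathbf{Z}H)^T = \sum_{k=2}^{n+1}\tilde{Z}_k\tilde{Z}_k^T$, and the surviving $n$ vectors $\tilde{Z}_2,\ldots,\tilde{Z}_{n+1}$ are iid $\mathcal{N}_p(0,I)$, so $\sum_{k=1}^{n+1}(Z_k-\bar{Z})(Z_k-\bar{Z})^T \stackrel{d}{=} \sum_{k=1}^n Y_kY_k^T = n\hat{I}$.

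Combining the two steps yields $\hat{\Sigma}_Z \stackrel{d}{=} \hat{I}$, and hence $\log\det\hat{\Sigma} - \log\det\Sigma = \log\det\hat{\Sigma}_Z \stackrel{d}{=} \log\det\hat{I}$, as claimed. I expect the orthogonal-transformation step to be the main obstacle, in that it requires careful bookkeeping of how the centering projection interacts with the Helmert basis; the whitening reduction and the determinant manipulations are routine linear algebra. If a shorter argument is preferred, this step can simply invoke the standard fact that the sample covariance matrix of $n+1$ iid $\mathcal{N}_p(0,I)$ vectors is distributed as $\frac{1}{n}W_p(I,n)$.
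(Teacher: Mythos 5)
Your proof is correct and follows essentially the same route as the paper: reduce to the identity-covariance case by whitening with $\Sigma^{-1/2}$ and use the fact that the centered sum of $n+1$ iid Gaussian outer products has the law of the uncentered sum of $n$ of them. The only difference is that you prove this last fact explicitly via the Helmert orthogonal transformation, whereas the paper simply cites the standard reference; you also rightly observe that the stated equality must be read as equality in distribution.
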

\begin{proof}
Note that the distribution of $\hat{\Sigma} = \frac{1}{n}\sum_{k=1}^{n+1} (X_k-\bar{X})(X_k-\bar{X})^T$ is the same as $ \frac{1}{n} \sum_{k=1}^n Z_k Z_k^T$, where $Z_{1},\ldots,Z_{n} \stackrel{iid}{\sim} \mathcal{N}_p(0, \Sigma)$. See, for example, \cite{anderson2003introduction}.
Define $Y_k = \Sigma^{-1/2} Z_k$, then $Y_{1},\ldots,Y_{n} \stackrel{iid}{\sim} \mathcal{N}_p(0, I)$ and
\beas
\log \det \hat{\Sigma} - \log \det \Sigma & = & \log \det \left( \Sigma^{-1/2} \hat{\Sigma} \Sigma^{-1/2}  \right) \\ 
& = &  \log \det \left( \Sigma^{-1/2} \left(\frac{1}{n} \sum_{k=1}^n Z_k Z_k^T\right)
\Sigma^{-1/2} \right) \\
& = &  \log \det \left( \frac{1}{n} \sum_{k=1}^n Y_k Y_k^T \right) \\
& = & \log \det\hat{I}. 
\eeas
\end{proof}

\bigskip
A variant of the well-known Bartlett decomposition \cite{anderson2003introduction} in multivariate statistics implies the following lemma on the distribution of the determinant of the sample covariance matrix.
\begin{lemma}
\label{BartlettLma}
The law of $\log \det \left(n\hat{I}\right)$ is the same as the sums of $p$-independent $\log \chi^2$ distribution, namely
\begin{equation}
\log \det \left(n\hat{I}\right) \stackrel{L}{=} \sum_{k=1}^p \log \left( \chi_{n-k+1}^2 \right)
\end{equation}
where $\chi^2_{n},\ldots,\chi^2_{n-p+1}$ are mutually independent $\chi^2$ distribution with the degrees of freedom $n,\ldots, n-p+1$ respectively. 
\end{lemma}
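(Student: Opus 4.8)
The plan is to recognize $n\hat{I}$ as a standard Wishart matrix and then extract the claimed $\chi^2$ law from its Cholesky (Bartlett) factorization. Since $Y_1,\ldots,Y_n \stackrel{iid}{\sim} \mathcal{N}_p(0,I)$, the matrix $n\hat{I} = \sum_{k=1}^n Y_k Y_k^T$ has the Wishart distribution $W_p(n,I)$ with $n$ degrees of freedom and identity scale. First I would write its Cholesky factorization $n\hat{I} = LL^T$ with $L$ lower triangular and positive diagonal entries $L_{11},\ldots,L_{pp}$. Then $\det(n\hat{I}) = (\det L)^2 = \prod_{k=1}^p L_{kk}^2$, so $\log\det(n\hat{I}) = \sum_{k=1}^p \log L_{kk}^2$, and the lemma reduces entirely to identifying the joint law of the diagonal squares $L_{kk}^2$.

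The key distributional input is the Bartlett decomposition: for a $W_p(n,I)$ matrix the Cholesky factors satisfy $L_{kk}^2 \sim \chi^2_{n-k+1}$, the off-diagonal entries $L_{ij}$ (for $i>j$) are standard normal, and all of these variables are mutually independent. In particular $L_{11}^2,\ldots,L_{pp}^2$ are independent with $L_{kk}^2 \sim \chi^2_{n-k+1}$, which upon substitution gives $\log\det(n\hat{I}) \stackrel{L}{=} \sum_{k=1}^p \log\chi^2_{n-k+1}$, exactly as claimed.

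To keep the argument self-contained I would derive these facts by Gram-Schmidt orthogonalization rather than merely cite them. Arrange the $Y_k$ as the columns of a $p\times n$ matrix $A$ with iid $\mathcal{N}(0,1)$ entries, so $n\hat{I}=AA^T$, and take an LQ factorization $A=LQ$ with $QQ^T=I_p$, giving $AA^T=LL^T$. Writing the rows of $A$ as $a_1,\ldots,a_p \in \mathbb{R}^n$, each independently $\mathcal{N}_n(0,I_n)$, Gram-Schmidt shows that $L_{kk}$ is the length of the component of $a_k$ orthogonal to the span of $a_1,\ldots,a_{k-1}$. Conditioning on $a_1,\ldots,a_{k-1}$, the vector $a_k$ is an independent spherical Gaussian, and its projection onto the orthogonal complement of that span lands in a fixed subspace, whence its squared length is $\chi^2$ with the dimension of that complement and is independent of the earlier rows.

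The only genuine subtlety, and the step I would handle most carefully, is the bookkeeping of degrees of freedom together with the independence claim. Since $k-1 \le p-1 < n$, the rows $a_1,\ldots,a_{k-1}$ are almost surely linearly independent, so their span is exactly $(k-1)$-dimensional and the orthogonal complement has dimension $n-k+1$; this forces $L_{kk}^2 \sim \chi^2_{n-k+1}$ and yields the decreasing degrees of freedom $n, n-1, \ldots, n-p+1$. The rotational invariance of $\mathcal{N}_n(0,I_n)$ is precisely what guarantees that this residual length is independent both of the random subspace and of the previously extracted lengths, which is the crux of the mutual independence of the $L_{kk}^2$.
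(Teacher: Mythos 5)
Your proof is correct and rests on exactly the same fact the paper invokes: the Bartlett decomposition of the Wishart matrix $n\hat{I}\sim W_p(n,I)$, under which the squared Cholesky diagonals are independent $\chi^2_{n-k+1}$ variables. The paper gives no argument at all for this lemma (it simply cites the Bartlett decomposition from Anderson's text), so your self-contained Gram--Schmidt derivation --- including the correct degrees-of-freedom bookkeeping and the conditional-independence step via rotational invariance --- is a complete and valid substitute.
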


\subsection{Proof of Theorem \ref{CLT.thm}}

The proof of Theorem \ref{CLT.thm} relies on the above two lemmas, the following Lemma \ref{real.lemma} and an analysis of the characteristic functions.
\begin{lemma}
\label{real.lemma}
\begin{equation}
\label{key}
r_{n,p} = \frac{\sum_{k=1}^p \frac{1}{(n-k+1)^2} }{ \sum_{k=1}^p \frac{1}{n-k+1} }  \leq  \max \left\{\frac{1}{\log n+1}
,  \frac{\frac{\pi^2}{6}}{\log(n+1)-\log(\log n+1)} \right\} \rightarrow 0
\end{equation}
uniformly in $p(n)$ as $n \goto \infty$, where $p(n)$ can grow with $n$, $p(n) \leq n$.
\end{lemma}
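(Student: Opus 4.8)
The plan is to reduce the statement to two elementary estimates combined through a single case split, where the threshold is chosen precisely to interpolate between the two terms in the claimed maximum. The starting point is a reindexing: writing $m = n-k+1$ and setting $q := n-p+1 \ge 1$, the summation index $m$ ranges over $q, q+1, \ldots, n$, so that
\[
r_{n,p} = \frac{\sum_{m=q}^{n} m^{-2}}{\sum_{m=q}^{n} m^{-1}} =: \frac{N}{D}.
\]
This form makes clear that $r_{n,p}$ is a weighted average of the values $m^{-1}$ over $m \in \{q,\ldots,n\}$, with the size of $q = n-p+1$ controlling how delicate the bound must be (when $p=n$ we have $q=1$, which is the worst case).

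Next I would record three basic inequalities. Since $m \ge q$ in every summand, $m^{-2} \le q^{-1} m^{-1}$, so $N \le q^{-1} D$, giving the crude bound $r_{n,p} \le 1/q = 1/(n-p+1)$. Second, $N \le \sum_{m=1}^{\infty} m^{-2} = \pi^2/6$, a bound that is uniform in both $q$ and $n$. Third, comparing the harmonic sum with an integral yields $D \ge \int_{q}^{n+1} x^{-1}\,dx = \log(n+1) - \log q$. These three estimates are all the analytic input needed; everything else is bookkeeping.

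The heart of the argument is the case split on whether $q \ge \log n + 1$. In the first case, the crude bound gives $r_{n,p} \le 1/q \le 1/(\log n + 1)$, which is the first term in the maximum. In the complementary case $q < \log n + 1$, I would use monotonicity of $\log$ to get $\log q < \log(\log n + 1)$, hence $D > \log(n+1) - \log(\log n + 1)$, and combine this with $N \le \pi^2/6$ to obtain $r_{n,p} \le (\pi^2/6)\big/\big(\log(n+1) - \log(\log n + 1)\big)$, the second term. In either case $r_{n,p}$ is at most the stated maximum, and since this bound does not involve $p$ at all, the inequality holds uniformly in $p(n)$.

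The convergence to $0$ is then immediate: the first term clearly vanishes, while the second vanishes because $(n+1)/(\log n + 1) \to \infty$ forces $\log(n+1) - \log(\log n + 1) \to \infty$. I do not expect a genuine obstacle here; the only real decision is the choice of the threshold $\log n + 1$, which is dictated by the requirement that the crude bound $1/q$ and the "saturated" bound $(\pi^2/6)/D$ cross over at exactly the right place. One minor point to check is positivity of $\log(n+1) - \log(\log n + 1)$ so the second bound is meaningful, which holds for all large $n$ and suffices for the asymptotic claim.
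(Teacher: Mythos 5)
Your proposal is correct and follows essentially the same route as the paper: the identical case split at $n-p+1 \gtrless \log n+1$, the crude bound $r_{n,p}\le 1/(n-p+1)$ in the first case, and the pairing of $\sum m^{-2}\le \pi^2/6$ with the lower bound $\sum_{m=q}^n m^{-1}\ge \log(n+1)-\log q$ in the second. The only cosmetic difference is that you obtain the denominator bound by integral comparison while the paper uses $\frac{1}{m}\ge\log(1+\frac{1}{m})$ and telescoping, which yields the same estimate.
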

\begin{proof}
We consider the following two scenarios (1) when $n-p(n) \geq \log n$. (2) when $n-p(n) \leq \log n$.
\\
For case (1), the equation \eqref{key} can be bounded in the following way
\begin{align}
r_{n,p} &= \frac{\sum_{k=1}^p \frac{1}{(n-k+1)^2} }{ \sum_{k=1}^p \frac{1}{n-k+1} } \nonumber \\
&\leq \frac{\frac{1}{n-p+1} \cdot \sum_{k=1}^p \frac{1}{n-k+1} }{ \sum_{k=1}^p \frac{1}{n-k+1} } \leq \frac{1}{\log n+1}.
\end{align}
For case (2), the equation \eqref{key} can be bounded in the following way
\begin{align}
r_{n,p} &= \frac{\sum_{k=1}^p \frac{1}{(n-k+1)^2} }{ \sum_{k=1}^p \frac{1}{n-k+1} }\nonumber \\
& \leq \frac{\frac{\pi^2}{6}}{\sum_{k=1}^p \log(1+\frac{1}{n-k+1}) } \nonumber \\
& \leq \frac{\frac{\pi^2}{6}}{\log(n+1)-\log(n-p+1) } \leq \frac{\frac{\pi^2}{6}}{\log(n+1)-\log(\log n+1) }.
\end{align}
Thus, we have the following bound for $r_{n,p}$ uniformly in $p(n)$
\begin{equation}
r_{n,p} \leq \max \left\{\frac{1}{\log n+1}
,  \frac{\frac{\pi^2}{6}}{\log(n+1)-\log(\log n+1)} \right\} \rightarrow 0, \quad \mbox{as} ~ n \rightarrow \infty.
\end{equation}
Basically we show this sequence converges to 0 uniformly faster than the $O(1/\log n)$ rate.
\end{proof}

\medskip
It follows from Lemmas \ref{StandardLma} and \ref{BartlettLma} that
\begin{equation}
\hat{T} - \log \det \Sigma = \log \det \left( n \hat{I}\right) - \sum\limits_{k=1}^p \left[ \psi\left(\frac{1}{2}(n-k+1)\right)+\log 2\right] \stackrel{\bigtriangleup}{=} Z.
\end{equation}
Thus,
\begin{equation}
Z \stackrel{L}{=} \sum_{k=1}^p \left[\log \left( \chi^2_{n-k+1}\right)-\psi\left(\frac{1}{2}(n-k+1)\right) -\log 2\right].
\end{equation}
Inspired by \cite{jonsson1982some} (where a special case of our theorem has been proved under much stronger conditions) and using the fact of the independence and the characteristic function of the logarithm Chi-square distribution, the characteristic function of $Z$ is
\bea
\phi_Z(t) & = & \prod\limits_{k=1}^p \phi_{\log \chi^2_{n-k+1}}(t) \cdot \frac{1}{{\rm exp}\left(it \cdot [\psi\left(\frac{1}{2}(n-k+1)\right) +\log 2] \right)} \nonumber\\
& = & \prod\limits_{k=1}^p \mathbb{E} e^{it \log \chi^2_{n-k+1} } \cdot \frac{1}{{\rm exp}\left(it \cdot [\psi\left(\frac{1}{2}(n-k+1)\right) +\log 2] \right)}\nonumber\\
& = & \prod\limits_{k=1}^p \mathbb{E} (\chi^2_{n-k+1} )^{it} \cdot  \frac{1}{{\rm exp}\left(it \cdot \psi\left(\frac{1}{2}(n-k+1)\right)  \right)\cdot 2^{it}} \nonumber\\
& = & \prod\limits_{k=1}^p\frac{\Gamma(\frac{1}{2}(n-k+1)+it)}{\Gamma(\frac{1}{2}(n-k+1))} 2^{it} \cdot  \frac{1}{{\rm exp}\left(it \cdot \psi\left(\frac{1}{2}(n-k+1)\right)  \right)\cdot 2^{it}} \nonumber\\
& = & \prod\limits_{k=1}^p\frac{\Gamma(\frac{1}{2}(n-k+1)+it)}{\Gamma(\frac{1}{2}(n-k+1))} \cdot  \frac{1}{{\rm exp}\left(it \cdot \psi\left(\frac{1}{2}(n-k+1)\right)  \right)} .
\eea
Thus we have,
\[
\log \phi_Z(t)  =  \sum\limits_{k=1}^p \left\{ \log \Gamma(\frac{1}{2}(n-k+1)+it) - \log \Gamma(\frac{1}{2}(n-k+1)) - it \cdot \psi\left(\frac{1}{2}(n-k+1)\right) \right\}. \\
\]
Using Taylor expension of Gamma and Digamma function \cite{bateman1981higher}, we have
\beas
\log \Gamma(z) &=& z\log z - z - \frac{1}{2}\log \frac{z}{2\pi} + \frac{1}{12z} + O(\frac{1}{|z|^2}) \\
\psi(z) &=& \log z - \frac{1}{2z} + O(\frac{1}{|z|^2}).
\eeas
Thus for each term in above characteristic function, we have
\bea
 && \log \Gamma(\frac{1}{2}(n-k+1)+it) - \log \Gamma(\frac{1}{2}(n-k+1)) - it \cdot \psi\left(\frac{1}{2}(n-k+1)\right) \nonumber\\
 & =& it \log(\frac{1}{2}(n-k)+1) -it \frac{1}{n-k+1}+ (it)^2 \frac{1}{n-k+1} - it \log(\frac{1}{2}(n-k+1)) \nonumber\\
 && + it \frac{1}{n-k+1} + O(\frac{1}{(n-k+1)^2}) \nonumber\\
 &=& \; (it)^2 \frac{1}{n-k+1} + O(\frac{|t|^2}{(n-k+1)^2}).
\eea
The characteristic function $\phi_0(t)$ of $\frac{1}{\sigma_{n,p}}\left(\log \det \hat{\Sigma} - \tau_{n,p} - \log \det \Sigma \right)$ is
\[
\phi_0(t) = {\rm exp}\left\{\frac{(it)^2}{2} + O(|t|^2 \cdot \frac{\sum_{k=1}^p \frac{1}{(n-k+1)^2} }{ \sum_{k=1}^p \frac{1}{n-k+1} })\right\}
\]
Lemma \ref{real.lemma} shows that
\begin{equation}
r_n = \frac{\sum_{k=1}^p \frac{1}{(n-k+1)^2} }{ \sum_{k=1}^p \frac{1}{n-k+1} } \rightarrow 0
\end{equation}
under $n\rightarrow \infty$ and $p\leq n$. Thus, when $n \rightarrow \infty$, $\phi_0(t) \rightarrow e^{\frac{(it)^2}{2}}$ and the result follows. \qed

\subsection{Proof of Theorem \ref{NAsymUpperThm}}
It follows from the variance of the logarithm Chi-square distribution and the Taylor expansion for TriGamma function that
\[
\psi'(z) = \frac{1}{z} + \left( \frac{1}{2z^2} + \frac{1}{6z^3} \right) \theta
\]
for $z \geq 1$ and $0 < \theta <1$.
Hence,
\begin{align}
\mathbb{E} \left( \hat{T} - \log \det \Sigma \right)^2 & =  {\rm Var}(\sum_{k=1}^p \log(\chi^2_{n-k+1})) \nonumber\\
& =  \sum_{k=1}^p \psi'\left( \frac{n-k+1}{2} \right) \nonumber\\
& =  \sum_{k=1}^p \left[ \frac{2}{n-k+1} + \frac{2 \theta}{(n-k+1)^2} +\frac{4\theta}{3(n-k+1)^3}\right] \nonumber\\
& \leq  \sum_{k=1}^p \left[ -2\log(1-\frac{1}{n-k+1}) + \frac{10}{3(n-k+1)^2}\right] 
\end{align}
Since $\sum_{k=1}^p \frac{1}{(n-k+1)^2} \leq \sum_{k=1}^p \frac{1}{(n-k)(n-k+1)} =\sum_{k=1}^p \left( \frac{1}{n-k} - \frac{1}{n-k+1} \right) = \frac{1}{n-p} - \frac{1}{n} $ and $ \sum_{k=1}^p  \log(1-\frac{1}{n-k+1}) =  \sum_{k=1}^p  \log(\frac{n-k}{n-k+1}) = \log(1-\frac{p}{n})$, we have
\begin{align}
\mathbb{E} \left( \hat{T} - \log \det \Sigma \right)^2 & \leq  -2 \log(1-\frac{p}{n}) + \frac{10}{3} \cdot (\frac{1}{n-p}-\frac{1}{n}) \nonumber\\
& =  -2 \log(1-\frac{p}{n}) + \frac{10p}{3n} \cdot \frac{1}{n-p}. \qed
\end{align}

\subsection{Proof of Theorem \ref{CRLowerThm} }

We first recall the biased version of Cramer-Rao Inequality in multivariate case.
Let  $\Theta_{1 \times p}$ be a parameter vector and let $X \sim f(\Theta)$, where $f(\Theta)$ is the density function.
Consider any estimator $\hat{T}(X)$ of the function $\phi(\Theta)$ with the bias $B(\Theta) = \mathbb{E} \hat{T}(X) - \phi(\Theta) = (b(\theta_1),\ldots,b(\theta_p))^T$. Then
\bea
&&\mathbb{E}_\Theta (\hat{T} - \phi(\Theta))^2  =   Var_\Theta( \hat{T}(X) ) + \| B(\Theta) \|_2^2 \nonumber\\
&& \geq \; \left( \frac{\partial  \left( \phi(\Theta) + B(\Theta) \right)}{\partial \Theta} \right)^T \cdot \left[ \mathbf{I}(\Theta) \right]^{-1} \cdot \frac{\partial \left( \phi(\Theta) + B(\Theta) \right)}{\partial \Theta}+ B(\Theta)^T  \cdot B(\Theta). \quad\quad \label{FisherInfo}
\eea
 Now consider the diagonal matrix subfamily, $\Sigma ={\rm diag}\left(\theta_1, \theta_2, ..., \theta_p\right) $, with $p$ parameters, $\theta_1, ..., \theta_p$.
We wish to estimate $\phi(\Theta) = \sum_{i=1}^p \log(\theta_i) = \log \det(\Sigma)$.
For a random sample $X_1, ..., X_n\stackrel{iid}{\sim} {\mathcal N}_p(0, \Sigma)$, the Fisher information matrix and the partial derivative of the $\phi(\Theta)$ are given by
\[
\mathbf{I}(\Theta) = {\rm diag}\left(\frac{n}{2\theta_1^2},\frac{n}{2\theta_2^2}, \ldots, \frac{n}{2\theta_p^2}\right) \quad\mbox{and}\quad
\frac{\partial \phi(\Theta)}{\partial \Theta} = \left(\frac{1}{\theta_1}, \frac{1}{\theta_2}, \ldots, \frac{1}{\theta_p}\right)^T.
\]
Equation (\ref{FisherInfo}) can be calculated explicitly as
\bea
& & \left( \frac{\partial  \left( \phi(\Theta) + B(\Theta) \right)}{\partial \Theta} \right)^T \cdot \left[ \mathbf{I}(\Theta) \right]^{-1} \cdot \frac{\partial \left( \phi(\Theta) + B(\Theta) \right)}{\partial \Theta}+ B(\Theta)^T  \cdot B(\Theta) \nonumber \\
&&=\;  \left(\frac{1}{\theta_1}+b'(\theta_1), ..., \frac{1}{\theta_p}+b'(\theta_p)\right) \left[ {\rm diag}\left(\frac{n}{2\theta_1^2}, \ldots, \frac{n}{2\theta_p^2}\right) \right]^{-1} \left(\frac{1}{\theta_1}+b'(\theta_1), \ldots, \frac{1}{\theta_p}+b'(\theta_p)\right) ^T \nonumber\\
&& \quad+ \sum_{k=1}^p b^2(\theta_k) \nonumber\\
&&=\;  \sum_{k=1}^p \left[ \frac{2}{n} \left( 1+ \theta_k b'(\theta_k)\right)^2+b^2(\theta_k) \right].\label{LowerBound}
\eea
As in \cite{brown1991information}, if we can prove that for any bias function $b(\theta)$
\begin{equation}
\sup_{\theta>0}  \left[ \frac{2}{n} \left( 1+ \theta b'(\theta)\right)^2+b^2(\theta) \right] \geq \frac{2}{n}, \label{InfoLowerBound}
\end{equation}
then the minimax lower bound result 
\begin{equation}
\inf\limits_{\hat{T}} \sup_{\Sigma\in \sf} \mathbb{E} (\hat{T} - \log \det \Sigma)^2 \geq 2 \cdot \frac{p}{n}
\end{equation}
 holds  for any  parameter space $\mathcal{F}$ containing the set of the diagonal matrices by combining (\ref{LowerBound}) and  (\ref{InfoLowerBound}).

To prove equation (\ref{InfoLowerBound}), we first prove that for any given constant $K>0$
\begin{equation}
 \sup_{1/K \leq \theta\leq K}  \left[ \frac{2}{n} \left( 1+ \theta b'(\theta)\right)^2+b^2(\theta) \right] \geq \frac{2}{n} \left( \frac{\log K}{\log K+\sqrt{\frac{2}{n}}} \right)^2. \label{LowerIneq}
\end{equation}
Assume 
\[
r_K \geq  \sup\limits_{1/K \leq \theta\leq K}  \left[ \frac{2}{n} \left( 1+ \theta b'(\theta)\right)^2+b^2(\theta) \right],
\]
 then we have the following two inequalities
\[
|1+\theta b'(\theta)| \leq \sqrt{\frac{n}{2}\cdot r_K}\quad\mbox{ and }\quad
|b(\theta)| \leq \sqrt{r_K},
\]
which implies $r_K \geq \frac{2}{n} \left( \frac{\log K}{\log K+\sqrt{\frac{2}{n}}} \right)^2 $. This means that $\frac{2}{n} \left( \frac{\log K}{\log K+\sqrt{\frac{2}{n}}} \right)^2$ is a lower bound for (\ref{LowerIneq}). Equation \eqref{InfoLowerBound} now follows by letting $K \goto \infty$. \qed

\subsection{Proof of Theorem \ref{AsympOptThm}}
The upper bound given in Theorem \ref{NAsymUpperThm} yields that
\[
 \inf_{\delta} \sup_{\Sigma} \mathbb{E} (\delta - \log \det \Sigma)^2 \leq -2 \cdot \log\left( 1- \frac{p}{n}\right) + \frac{p}{n} \cdot \frac{10}{3(n-p)}.
\]
It then follows from the assumption $n-p\rightarrow \infty$ that
\[
 \varlimsup_{n \rightarrow \infty} \inf_{\hat{T}} \sup_{\Sigma} \mathbb{E} (\hat{T} - \log \det \Sigma)^2 \leq 2 \cdot \limsup_{n \rightarrow \infty} -\log\left( 1- \frac{p}{n}\right).
\]
When $r=0$, $-2 \cdot \log\left(1- \frac{p}{n}\right) \sim 2\cdot \frac{p}{n}$ and the upper bound follows.
For the lower bound, Theorem \ref{CRLowerThm} implies
\[
 \varliminf_{n \rightarrow \infty} \inf_{\hat{T}} \sup_{\Sigma} \mathbb{E} (\hat{T} - \log \det \Sigma)^2 \geq 2\cdot \liminf_{n \rightarrow \infty} \frac{p}{n}.
\]
This completes the proof. \qed

\subsection{Proof of Theorem \ref{MinimaxLowerThm}}

The proof uses a two point hypothesis testing argument due to Le Cam (see \cite{tsybakov2008introduction} page 79-80).
\begin{lemma}[Le Cam's Lemma]
\label{LeCam}
Let $\hat{\theta}$ be any estimator of $\theta$ based on an observation from a distribution in the collection $\{\mathbb{P}_{\theta_0}, \mathbb{P}_{\theta_1}\}$, suppose $|\theta_0 - \theta_1| \geq 2s $, then
\begin{equation}
\inf_{\hat{\theta}} \sup_{\theta \in \{\theta_0,\theta_1\}} \mathbb{E} (\hat{\theta}_n - \theta)^2 \geq s^2 \cdot \frac{1}{2} \|\mathbb{P}_{\theta_0} \wedge \mathbb{P}_{\theta_1} \| 
\end{equation}
where $\|\mathbb{P} \wedge \mathbb{Q}\| = \int (p \wedge q) d\mu$, is affinity between probability measures.
\end{lemma}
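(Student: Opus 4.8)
The plan is to prove this by the standard reduction from estimation to testing: first convert the minimax risk over the two-point family into the minimal total error probability of a binary hypothesis test, and then identify that minimal error with the affinity $\|\mathbb{P}_{\theta_0}\wedge\mathbb{P}_{\theta_1}\|$.

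First I would bound the supremum by the average of the two risks, since for any estimator $\hat\theta$ the maximum of two numbers dominates their mean:
\[
\sup_{\theta\in\{\theta_0,\theta_1\}}\mathbb{E}_\theta(\hat\theta-\theta)^2 \;\geq\; \frac{1}{2}\left[\mathbb{E}_{\theta_0}(\hat\theta-\theta_0)^2+\mathbb{E}_{\theta_1}(\hat\theta-\theta_1)^2\right].
\]
Next, from any estimator I would construct the natural ``closest point'' test $\psi$ that reports the index $j\in\{0,1\}$ minimizing $|\hat\theta-\theta_j|$ (ties broken arbitrarily). The triangle inequality combined with the separation $|\theta_0-\theta_1|\geq 2s$ shows that a wrong test decision forces a large estimation error: if $\theta=\theta_0$ and $\psi=1$, then $|\hat\theta-\theta_1|\leq|\hat\theta-\theta_0|$, so $|\theta_0-\theta_1|\leq 2|\hat\theta-\theta_0|$ and hence $|\hat\theta-\theta_0|\geq s$. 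Thus $\{\psi=1\}\subseteq\{|\hat\theta-\theta_0|\geq s\}$, and symmetrically under $\mathbb{P}_{\theta_1}$. Markov's inequality then gives $\mathbb{E}_{\theta_0}(\hat\theta-\theta_0)^2\geq s^2\,\mathbb{P}_{\theta_0}(\psi=1)$ and $\mathbb{E}_{\theta_1}(\hat\theta-\theta_1)^2\geq s^2\,\mathbb{P}_{\theta_1}(\psi=0)$, so the averaged risk is at least $\tfrac{s^2}{2}\left[\mathbb{P}_{\theta_0}(\psi=1)+\mathbb{P}_{\theta_1}(\psi=0)\right]$.

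The heart of the argument is then to lower bound the total testing error by the affinity, uniformly over all tests. Writing the error of a test with rejection function $\phi(x)\in[0,1]$ as $\int[\phi p_0+(1-\phi)p_1]\,d\mu$, I would minimize the integrand pointwise in $\phi(x)$: for each $x$ the minimum value is $\min(p_0(x),p_1(x))$, attained by the likelihood-ratio test. Integrating yields
\[
\mathbb{P}_{\theta_0}(\psi=1)+\mathbb{P}_{\theta_1}(\psi=0)\;\geq\;\int\min(p_0,p_1)\,d\mu\;=\;\|\mathbb{P}_{\theta_0}\wedge\mathbb{P}_{\theta_1}\|.
\]
Combining this with the previous display and taking the infimum over $\hat\theta$ delivers the stated bound $s^2\cdot\tfrac{1}{2}\|\mathbb{P}_{\theta_0}\wedge\mathbb{P}_{\theta_1}\|$.

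The argument is essentially routine; the only point requiring genuine care is the estimator-to-test reduction, where the separation $|\theta_0-\theta_1|\geq 2s$ must be used precisely to guarantee that a wrong decision incurs squared error at least $s^2$. As an alternative that bypasses the test entirely, one can use the pointwise bound $(a-\theta_0)^2p_0+(a-\theta_1)^2p_1\geq 2s^2\min(p_0,p_1)$ valid for every real $a$ (which follows from $(a-\theta_0)^2+(a-\theta_1)^2\geq\tfrac12|\theta_0-\theta_1|^2\geq 2s^2$), integrate, and obtain the slightly stronger constant $s^2\|\mathbb{P}_{\theta_0}\wedge\mathbb{P}_{\theta_1}\|$; I would note this in a remark, since the stated bound then follows a fortiori.
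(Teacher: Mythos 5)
Your proof is correct and complete. The paper itself does not prove this lemma but cites it directly from Tsybakov (2008), pp.~79--80, and your argument --- reduction of the two-point minimax risk to a binary test via the closest-point rule with Markov's inequality, followed by pointwise minimization over test functions to identify the minimal total error with $\|\mathbb{P}_{\theta_0} \wedge \mathbb{P}_{\theta_1}\|$ --- is precisely the standard proof found in that reference; your closing remark is also right that integrating the pointwise bound $(a-\theta_0)^2 p_0 + (a-\theta_1)^2 p_1 \geq 2s^2 \min(p_0,p_1)$ yields the sharper constant $s^2\|\mathbb{P}_{\theta_0} \wedge \mathbb{P}_{\theta_1}\|$, from which the stated bound follows a fortiori.
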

The total variance affinity can be lower bounded in terms of the  $\chi^2$ distance.
\begin{lemma}[Pinsker's Inequality]
\label{Pinsker}
\begin{equation}
\|\mathbb{P} \wedge \mathbb{Q}\| = 1 - TV(\mathbb{P}, \mathbb{Q}) \geq 1 - \sqrt{KL(\mathbb{P}, \mathbb{Q})/2}  \geq 1 - \sqrt{\chi^2(\mathbb{P}, \mathbb{Q})/2}
\end{equation}
where $TV(\mathbb{P}, \mathbb{Q}) = \frac{1}{2}\int |p-q| d\mu$ is the total variation distance, $KL(\mathbb{P}, \mathbb{Q})$ is the Kullback-Leiber divergence, $\chi^2(\mathbb{P}, \mathbb{Q})$ is the $\chi^2$ distance.
\end{lemma}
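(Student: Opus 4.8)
The plan is to treat the displayed formula as a chain of one equality and two inequalities and to prove each link separately. Since the map $x\mapsto 1-\sqrt{x/2}$ is decreasing, the chain follows once I establish the identity $\|\mathbb{P}\wedge\mathbb{Q}\| = 1-TV(\mathbb{P},\mathbb{Q})$ together with the two distance comparisons $2\,TV(\mathbb{P},\mathbb{Q})^2 \le KL(\mathbb{P},\mathbb{Q})$ (which is Pinsker's inequality proper) and $KL(\mathbb{P},\mathbb{Q}) \le \chi^2(\mathbb{P},\mathbb{Q})$. Throughout I write $p,q$ for the densities of $\mathbb{P},\mathbb{Q}$ with respect to the dominating measure $\mu$.

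First I would dispose of the two easy links. For the equality, I would use the pointwise identity $p\wedge q = \tfrac12\big(p+q-|p-q|\big)$ and integrate; since $\int p\,d\mu = \int q\,d\mu = 1$, this gives $\int (p\wedge q)\,d\mu = 1 - \tfrac12\int|p-q|\,d\mu = 1 - TV(\mathbb{P},\mathbb{Q})$. For the comparison $KL\le\chi^2$, I would apply the elementary bound $\log t \le t-1$ at $t=p/q$, obtaining
\[
KL(\mathbb{P},\mathbb{Q}) = \int p\log\frac{p}{q}\,d\mu \le \int p\Big(\frac{p}{q}-1\Big)\,d\mu = \int\frac{p^2}{q}\,d\mu - 1 = \chi^2(\mathbb{P},\mathbb{Q}),
\]
where the last equality is just the expansion $\chi^2(\mathbb{P},\mathbb{Q}) = \int (p-q)^2/q\,d\mu = \int p^2/q\,d\mu - 1$.

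The substantive step is Pinsker's inequality $2\,TV^2\le KL$, which I would prove by reducing to the two-point (Bernoulli) case. Set $A=\{p\ge q\}$, so that $TV(\mathbb{P},\mathbb{Q}) = \mathbb{P}(A)-\mathbb{Q}(A)$. By the data-processing inequality for $KL$ applied to the binary partition $\{A,A^c\}$, one has $KL(\mathbb{P},\mathbb{Q}) \ge kl(a\,\|\,b)$, where $a=\mathbb{P}(A)$, $b=\mathbb{Q}(A)$, and $kl(a\,\|\,b) = a\log\frac{a}{b} + (1-a)\log\frac{1-a}{1-b}$ is the binary relative entropy. It then remains to prove the scalar inequality $kl(a\,\|\,b)\ge 2(a-b)^2$ for $a,b\in[0,1]$. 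Fixing $a$ and writing $g(b)=kl(a\,\|\,b)-2(a-b)^2$, I would compute $g(a)=0$ and $g'(b) = (a-b)\big(4-\tfrac{1}{b(1-b)}\big)$; since $b(1-b)\le\tfrac14$ forces the second factor to be nonpositive, $g$ is decreasing on $[0,a]$ and increasing on $[a,1]$, whence $g(b)\ge g(a)=0$. This gives $KL\ge kl(a\,\|\,b)\ge 2(a-b)^2 = 2\,TV^2$, i.e. $TV\le\sqrt{KL/2}$, and assembling the three links completes the proof.

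The main obstacle is this last link: the content of Pinsker's inequality is a genuine estimate rather than a one-line convexity bound, and the difficulty is in finding the reduction that collapses it to a calculus problem. The data-processing step is what legitimizes replacing the full densities by the single scalar pair $(a,b)$, after which everything reduces to the one scalar estimate $kl(a\,\|\,b)\ge 2(a-b)^2$. If a fully self-contained argument were preferred, I would instead combine a pointwise convexity bound of the form $(p-q)^2 \lesssim (p+q)\big(p\log\tfrac{p}{q}-p+q\big)$ with the Cauchy--Schwarz inequality, but the Bernoulli reduction is cleaner and isolates precisely where the nontrivial constant $2$ enters.
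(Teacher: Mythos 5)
Your proof is correct in all three links, and the verification goes through: the pointwise identity $p\wedge q=\tfrac12\bigl(p+q-|p-q|\bigr)$ gives the equality upon integrating; $\log t\le t-1$ at $t=p/q$ gives $KL\le\chi^2$; and your scalar reduction is sound, since with $A=\{p\ge q\}$ one indeed has $TV=\mathbb{P}(A)-\mathbb{Q}(A)$, the binary data-processing step is the log-sum inequality applied to $\{A,A^c\}$, and the derivative computation $g'(b)=(a-b)\bigl(4-\tfrac{1}{b(1-b)}\bigr)$ together with $b(1-b)\le\tfrac14$ correctly forces $g\ge g(a)=0$ (the endpoint cases $b\in\{0,1\}$ and the case $KL=\infty$ are trivially fine). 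For comparison: the paper offers no proof of this lemma at all --- it is invoked as a standard fact, in the spirit of the references given for Le Cam's method \cite{tsybakov2008introduction}, and is used only as a bridge from affinity to the $\chi^2$ distance in the proof of Theorem \ref{MinimaxLowerThm}. So your write-up is strictly more than what the paper supplies; the route you chose (Bernoulli reduction plus the scalar estimate $kl(a\,\|\,b)\ge 2(a-b)^2$) is the canonical textbook proof of Pinsker's inequality, and it correctly isolates where the constant $2$ comes from. The only cosmetic point is that you could cite or prove the one-line log-sum inequality rather than invoking data processing as a black box, but this does not affect correctness.
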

We use the follow lemma,  which is a direct consequence of Lemma 2 in \cite{cai2011minimax}, to bound the $\chi^2$ distance.
\begin{lemma}
For $i=0$ and $1$, let $\mathbb{P}_i$ be the joint distribution of $n$ independent $p$-dimensional Gaussian variables with the covariance matrix $\Sigma_i$. The  $\chi^2$ distance $\chi^2(\mathbb{P}_0,\mathbb{P}_1)$ satisfies
\begin{equation}
\chi^2(\mathbb{P}_0,\mathbb{P}_1)+1 = \int \frac{P_1^2}{P_0} d\mu = \left\{ \det\left( I - (\Sigma_1-\Sigma_0)\Sigma_0^{-1}(\Sigma_1-\Sigma_0)\Sigma_0^{-1} \right)\right\}^{-n/2}
\end{equation}
\end{lemma}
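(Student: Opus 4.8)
The plan is to exploit the product structure of the joint densities to reduce the $n$-sample $\chi^2$ affinity to a single-observation Gaussian integral, evaluate that integral in closed form, and then simplify the resulting determinant by a congruence transformation. Write $P_i = \prod_{j=1}^n f_{\Sigma_i}(x_j)$, where $f_\Sigma$ denotes the $\mathcal{N}_p(0,\Sigma)$ density. Because the $n$ observations are independent, the integrand $P_1^2/P_0$ factorizes across the $n$ blocks, so
\[
\int \frac{P_1^2}{P_0}\,d\mu = \left( \int \frac{f_{\Sigma_1}(x)^2}{f_{\Sigma_0}(x)}\,dx \right)^n,
\]
and it suffices to evaluate the single $p$-dimensional integral $I := \int f_{\Sigma_1}^2/f_{\Sigma_0}\,dx$.

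Next I would collect the Gaussian exponents. The exponent of $f_{\Sigma_1}^2/f_{\Sigma_0}$ is $-\tfrac12 x^T A x$ with $A = 2\Sigma_1^{-1} - \Sigma_0^{-1}$, and the normalizing prefactor is $(2\pi)^{-p/2}(\det\Sigma_0)^{1/2}(\det\Sigma_1)^{-1}$. Provided $A$ is positive definite, the standard formula $\int e^{-\frac12 x^T A x}\,dx = (2\pi)^{p/2}(\det A)^{-1/2}$ gives
\[
I = \frac{(\det\Sigma_0)^{1/2}}{\det\Sigma_1}\,(\det A)^{-1/2}, \qquad\mbox{so that}\qquad I^{-2} = \det A \cdot \frac{(\det\Sigma_1)^2}{\det\Sigma_0}.
\]

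The crux is showing $I^{-2} = \det\bigl(I - (\Sigma_1-\Sigma_0)\Sigma_0^{-1}(\Sigma_1-\Sigma_0)\Sigma_0^{-1}\bigr)$. The cleanest route is a congruence transformation: set $B = \Sigma_0^{-1/2}\Sigma_1\Sigma_0^{-1/2}$, a symmetric positive definite matrix. A short computation gives $\Sigma_0^{-1/2}A\,\Sigma_0^{-1/2} = 2B^{-1}-I$, hence $\det A \cdot (\det\Sigma_1)^2/\det\Sigma_0 = \det(2B^{-1}-I)\,(\det B)^2 = \det(2B - B^2)$. On the other side, conjugating by $\Sigma_0^{-1/2}$ turns $(\Sigma_1-\Sigma_0)\Sigma_0^{-1}(\Sigma_1-\Sigma_0)\Sigma_0^{-1}$ into $(B-I)^2$, so that $\det(I - M) = \det\bigl(I - (B-I)^2\bigr) = \det(2B - B^2)$. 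The two expressions coincide, giving $I = \{\det(I-M)\}^{-1/2}$, and raising to the $n$-th power delivers the claimed identity.

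The main obstacle is precisely this determinant identity in the third step; the substitution $B = \Sigma_0^{-1/2}\Sigma_1\Sigma_0^{-1/2}$ is what removes the asymmetry between $\Sigma_0$ and $\Sigma_1$ and collapses both sides to $\det(2B-B^2)$. The only analytic subtlety is the positive-definiteness of $A = 2\Sigma_1^{-1}-\Sigma_0^{-1}$, which is exactly the condition for $\chi^2(\mathbb{P}_0,\mathbb{P}_1)$ to be finite: when it fails the right-hand determinant is nonpositive and both sides equal $+\infty$, so the identity persists as an equality in the extended reals. In the intended application the two covariances lie in $\mathcal{D}_K$ and are taken close enough that $A$ is automatically positive definite.
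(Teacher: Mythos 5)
Your proof is correct, and it is worth noting that the paper itself gives no argument for this lemma at all -- it simply declares it "a direct consequence of Lemma 2 in \cite{cai2011minimax}". Your derivation (factorize the $n$-fold product, evaluate the single-observation Gaussian integral $\int f_{\Sigma_1}^2/f_{\Sigma_0}$ with exponent matrix $A=2\Sigma_1^{-1}-\Sigma_0^{-1}$, then reconcile $\det A\,(\det\Sigma_1)^2/\det\Sigma_0$ with $\det(I-M)$ via the congruence $B=\Sigma_0^{-1/2}\Sigma_1\Sigma_0^{-1/2}$, both sides collapsing to $\det(2B-B^2)$) is a valid, self-contained replacement for that citation; I verified the determinant identity and the final exponent $-n/2$. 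Two small blemishes. First, a sign typo: the correct conjugation is $\Sigma_0^{1/2}A\,\Sigma_0^{1/2}=2B^{-1}-I$, not $\Sigma_0^{-1/2}A\,\Sigma_0^{-1/2}$; with your version the factors of $\det\Sigma_0$ would not cancel, but your subsequent algebra uses the correct quantities, so this is cosmetic. Second, your closing remark that when $A\not\succ 0$ "the right-hand determinant is nonpositive" is not quite accurate: $\det(2B-B^2)=\prod_i\lambda_i(2-\lambda_i)$ can be positive when an even number of eigenvalues of $B$ exceed $2$, in which case the integral is $+\infty$ while the right-hand side is finite, so the identity genuinely requires $2\Sigma_1^{-1}\succ\Sigma_0^{-1}$ (a hypothesis the paper's statement also omits). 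This does not affect the application, where $\Sigma_0=I$ and $\Sigma_1=(1+1/\sqrt{np})I$ make $A$ comfortably positive definite.
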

To prove the lower bound given in \ref{MinimaxLowerThm},
we pick $\Sigma_0= I_{p\times p}$, $\Sigma_1 = (1+\frac{1}{\sqrt{np}}) \cdot I_{p\times p}$. \\
Firstly, let's prove the theorem under $np>\max\{\frac{1}{(K-1)^2},1\}$. It is easy to see that this two points lie in the parameter space because $1/K < 1+\frac{1}{\sqrt{np}} < K$.
Then
\begin{equation}
|\theta_0 -\theta_1|= |\log \det \Sigma_0 - \log\det \Sigma_1| = p \log\left(1+\sqrt{\frac{1}{pn}}\right) > p \frac{\sqrt{\frac{1}{pn}}}{1+\sqrt{\frac{1}{pn}}} \geq \frac{1}{2} \sqrt{\frac{p}{n}}
\end{equation}
\begin{equation}
\begin{array}{rcl}
\chi^2(\mathbb{P}_0,\mathbb{P}_1)+1 &=& \left\{ \det\left( I - (\Sigma_1-\Sigma_0)\Sigma_0^{-1}(\Sigma_1-\Sigma_0)\Sigma_0^{-1} \right)\right\}^{-n/2}\\
& = & \left(1-\frac{1}{np}\right)^{-\frac{1}{2}np} < e^{\frac{1}{2}np \frac{\frac{1}{np}}{1-\frac{1}{np}}} < e <\infty \quad \mbox{For $np>1$}.
\end{array}
\end{equation}
The $\chi^2$ distance is upper bounded away from infinity, thus the affinity term is lower bounded away from 0. At the same time, the parameters are well separated away with a distrance $s = \frac{1}{4}\sqrt{\frac{p}{n}}$.
Thus, by Le Cam's Lemma, we have, for some constant $c>0$ ($c\leq 2$ is due to the Theorem \ref{CRLowerThm})
\begin{equation}
\inf_{\delta}\sup_{\Sigma \in \mathcal{D}_K} \mathbb{E} \left( \delta - \log \det \Sigma \right)^2 \geq  \left(\frac{1}{4}\sqrt{\frac{p}{n}}\right)^2 \cdot \frac{1}{2} \left(1 - \sqrt{\frac{e-1}{2}}\right)  = c \cdot \frac{p}{n}  ,
\end{equation}
for all $p,n$ as long as $np>\max\{\frac{1}{(K-1)^2},1\}$. More specifically, $c$ can be taken as $\frac{1}{32} \left(1 - \sqrt{\frac{e-1}{2}}\right)$. \\
Secondly, for $np \leq \max\{\frac{1}{(K-1)^2},1\}$, there are only finite collection of $(n,p)$ pairs, thus we must have a constant $c_K$ small enough such that
\begin{equation}
\inf_{\delta}\sup_{\Sigma \in \mathcal{D}_K} \mathbb{E} \left( \delta - \log \det \Sigma \right)^2 \geq  c_K \cdot \frac{p}{n}.
\end{equation}
Thus combining two parts, we can pick $C_K = \min\{c_K, c \}$, 
 which completes the proof. \qed

\bigskip



\bibliographystyle{ieeetr}
\bibliography{Covariance-Determinant-081513}

\end{document}